\documentclass{amsart}

\usepackage{amssymb}
\usepackage{amsmath}

\newtheorem{theorem}{Theorem}[section]
\newtheorem{claim}{Claim}[theorem]
\newtheorem{lemma}[theorem]{Lemma}
\newtheorem{proposition}[theorem]{Proposition}
\newtheorem{corollary}[theorem]{Corollary}

\theoremstyle{definition}
\newtheorem{definition}[theorem]{Definition}

\newtheorem{example}[theorem]{Example}

\theoremstyle{remark}
\newtheorem{remark}[theorem]{Remark}

\newcommand{\forces}{\Vdash}
\newcommand{\bV}{{\bf V}}
\newcommand{\comp}{\circ}
\newcommand{\qfor}{{{\mathbb Q}^*_\infty(K,\Sigma)}}

\newcommand{\nor}{{\rm {\bf nor}}\/}
\newcommand{\pos}{{\rm pos}}
\newcommand{\dn}{{\rm dn}}
\newcommand{\up}{{\rm up}}
\newcommand{\val}{{\bf val}}
\newcommand{\dis}{{\bf dis}}
\newcommand{\bsum}{{\rm sum}}

\newcommand{\can}{2^{\textstyle \omega}}
\newcommand{\conc}{{}^\frown\!}
\newcommand{\lh}{{\rm lh}\/}
\newcommand{\rest}{{\restriction}}
\newcommand{\dom}{{\rm dom}}

\newcommand{\con}{{\mathfrak c}}
\newcommand{\CR}{{\rm CR}}


\newcommand{\mbR}{{\mathbb R}}
\newcommand{\bP}{{\mathbb P}}
\newcommand{\bQ}{{\mathbb Q}}
\newcommand{\bH}{{\bf H}}
\newcommand{\cP}{{\mathcal P}}

\begin{document}

\title{Nonproper Products}

\author{Andrzej Roslanowski}
\address{Department of Mathematics, University of Nebraska at Omaha, Omaha 
NE 68182, USA} 
\email{roslanow@member.ams.org}

\author{Saharon Shelah}
\address{Einstein Institute of Mathematics, Givat Ram, The Hebrew University
of Jerusalem, Jerusalem 91904, Israel and Department of Mathematics, Rutgers
University, New Brunswick, NJ 08854, USA}
\email{shelah@math.huji.ac.il}

\thanks{The first two authors acknowledge support from the United
 States-Israel Binational Science Foundation (Grant no. 2006108). This is 
 publication 941 of the second author.}     

\author{Otmar Spinas}
\thanks{The research of the third author was partially supported by DFG
 grant SP 683/1-2 and the Landau foundation.}
\address{Mathematisches Seminar, Christian-Albrechts-Universität zu Kiel,
 Ludewig-Meyn-Straße 4, 24098 Kiel, Germany}
\email{spinas@math.uni-kiel.de}

\subjclass{03E40}

\begin{abstract}
We show that there exist two proper creature forcings
having a simple (Borel) definition, whose product is not proper. We also
give a new condition ensuring properness of some forcings with norms.
\end{abstract} 

\maketitle

\section{Introduction}
In Ros{\l}anowski and Shelah \cite{RoSh:470} a theory of forcings built with
the use of norms was developed and a number of conditions to ensure the
properness of the resulting forcings was given. However it is not clear how
sharp those results really are and this problem was posed in Shelah
\cite[Question 4.1]{Sh:666}. In particular, he asked about the properness of
the forcing notion      
\[\bQ=\{\langle w_n:n<\omega\rangle:w_n\subseteq 2^n,w_n\ne\emptyset\mbox{
and }\lim_{n\to\omega}|w_n|=\infty\}\]
ordered by $\bar{w}\le\bar{w}'\ \Leftrightarrow\ (\forall n\in\omega)(w'_n
\subseteq w_n)$. In the second section we give a general criterion for
collapsing the continuum to $\aleph_0$ and then in Corollary \ref{balu}
we apply it to the forcing $\bQ$, just showing that it is not proper. 

That the property of properness is not productive, i.e. is not preserved
under taking products, has been observed by Shelah long ago (see \cite[XVII,
2.12]{Sh}).  However, his examples are somewhat artificial and certainly it
would be desirable to know of some rich enough subclass of proper forcings
that is productively closed.  It was a natural conjecture put forth by
Zapletal, that the class of definable, say analytic or Borel, proper
forcings would have this property. Actually, it was only proved recently by
Spinas \cite{Sp09} that finite powers of the Miller rational perfect set
forcing and finite powers of the Laver forcing notion are proper. These are
two of the most frequently used forcings in the set theory of the
reals. However, in this paper we shall show that this phenomenon does not
extend to all forcing notions defined in the setting of norms on
possibilities. In the fourth section of the paper we give an example of a
forcing notion with norms which, by the theory developed in the second
section, is not proper and yet it can be decomposed as a product of two
proper forcing notions of a very similar type, and both of which have a
Borel definition.  The properness of the factors is a consequence of a quite 
general theorem presented in the third section (Theorem \ref{getproper}). It
occurs that a strong version of halving from \cite[Section 2.2]{RoSh:470}
implies the properness of forcing notions of the type $\qfor$. More on
applications of halving can be found in Kellner and Shelah \cite{KrSh:872,
  KrSh:961} and Ros{\l}anowski and Shelah \cite{RoSh:972}.  
\medskip

\noindent {\bf Notation}\qquad Most of our notation is standard and
compatible with that of classical textbooks on Set Theory (like
Bartoszy\'nski and Judah \cite{BaJu95}). However in forcing we keep the
convention that {\em a stronger condition is the larger one}.   

In this paper $\bH$ will stand for a function with domain $\omega$ and such
that $(\forall m\in\omega)(2\leq |\bH(m)|<\omega)$. We also assume that
$0\in \bH(m)$ (for all $m\in\omega$); if it is not the case then we fix an
element of $\bH(m)$ and we use it whenever appropriate notions refer to $0$.
\medskip

\noindent {\bf Creature background:}\qquad Since our results are stated for
creating pairs with several special properties, below we present a somewhat
restricted context of the creature forcing, introducing {\em good creating
  pairs}.  

\begin{definition}
\label{crpairs}
\begin{enumerate}
\item {\em A creature for $\bH$} is a triple 
\[t=(\nor,\val,\dis)=(\nor[t],\val[t],\dis[t])\]
such that $\nor\in\mbR^{{\geq}0}$, $\dis\in {\mathcal H}(\omega_1)$, and for
some integers $m^t_\dn<m^t_\up<\omega$
\[\emptyset\neq\val\subseteq\{\langle u,v\rangle\in\prod_{i<m^t_\dn}\bH(i)
\times\prod_{i<m^t_\up}\bH(i): u\vartriangleleft v\}.\]
The family of all creatures for $\bH$ is denoted by $\CR[\bH]$. 
\item Let $K\subseteq\CR[\bH]$ and $\Sigma:K\longrightarrow\cP(K)$. We say
  that $(K,\Sigma)$ is a {\em good creating pair\/} for $\bH$ whenever the
  following conditions are satisfied for each $t\in K$.  
\begin{enumerate}
\item[(a)] {[{\em Fullness}]} $\dom(\val[t])=\prod\limits_{i<m^t_\dn}
  \bH(i)$. 
\item[(b)] $t\in\Sigma(t)$ and if $s\in\Sigma(t)$, then $\val[s]\subseteq
  \val[t]$ and so also $m^s_\dn=m^t_\dn$ and $m^s_\up=m^t_\up$.  
\item[(c)] {[{\em Transitivity\/}]} If $s\in \Sigma(t)$, then $\Sigma(s)
  \subseteq \Sigma(t)$.  
\end{enumerate}
\item A good creating pair $(K,\Sigma)$ is
\begin{itemize} 
\item {\em local\/} if $m^t_\up=m^t_\dn+1$ for all $t\in K$,
\item {\em forgetful\/} if for every $t\in K$, $v\in
  \prod\limits_{i<m^t_\up}\bH(i)$, and $u\in \prod\limits_{i<m^t_\dn}
  \bH(i)$ we have 
\[\langle v\rest m^t_\dn,v\rangle\in\val[t]\qquad \Rightarrow\qquad \langle
u, u\conc v\rest [m^t_\dn,m^t_\up)\rangle\in \val[t],\] 
\item {\em strongly finitary\/} if for each $i<\omega$ we have 
\[|\bH(i)|<\omega\qquad\mbox{ and }\qquad |\{t\in K: m^t_\dn=i\}|<\omega.\] 
\end{itemize}
\item If $t_0,\ldots,t_n\in K$ are such that $m^{t_i}_\up=m^{t_{i+1}}_\dn$
(for $i<n$) and $w\in\prod\limits_{i<m^{t_0}_\dn}\bH(i)$, then we let
\[\pos(w,t_0,\ldots,t_n)\stackrel{\rm def}{=}\{v\!\in\!\!\prod_{j<m^{
t_n}_\up}\!\!\bH(j)\!: w\vartriangleleft v\ \&\ (\forall i\leq n)(\langle
v\rest m^{t_i}_\dn, v\rest m^{t_i}_\up\rangle\in\val[t_i])\}.\]
If $K$ is forgetful and $t\in K$, then we also define
\[\pos(t)=\big\{v\rest [m^t_\dn,m^t_\up):\langle v\rest m^t_\dn,v\rangle\in
\val[t]\big\}.\] 
\end{enumerate}
\end{definition}

Note that if $K$ is forgetful, then to describe a creature in $K$ it is
enough to give $\pos(t),\nor[t]$ and $\dis[t]$. This is how our examples
will be presented (as they all will be forgetful). Also, if $K$ is
additionally local, then we may write $\pos(t)=A$ for some $A\subseteq
\bH(m^t_\dn)$ with a natural interpretation of this abuse of notation.  

If $w,t_0,\ldots,t_n$ are as in \ref{crpairs}(4) and $s_i\in\Sigma(t_i)$ for
$i\leq n$, and $u\in\pos(w,s_0,\ldots,s_k)$, $k<n$, then $\pos(u,s_k,
\ldots,s_n)\subseteq \pos(w,t_0,\ldots,t_n)$ (remember \ref{crpairs}(2b)).

\begin{definition}
\label{forcing}
Let $(K,\Sigma)$ be a good creating pair for $\bH$. We define a
forcing notion $\qfor$ as follows. 
\medskip

\noindent {\bf A condition} in $\qfor$ is a sequence $p=(w^p,t^p_0,t^p_1,
t^p_2,\ldots)$ such that 
\begin{enumerate}
\item[(a)] $t^p_i\in K$ and $m^{t^p_i}_\up=m^{t^p_{i+1}}_\dn$ (for $i<
\omega$), and 
\item[(b)] $w\in\prod\limits_{i<m^{t^p_0}_\dn}\bH(i)$ and $\lim\limits_{n\to\infty}
  \nor[t^p_n]=\infty$.
\end{enumerate}

\noindent{\bf The relation $\leq$} on $\qfor$ is given by: \quad $p\leq q$ 
\quad if and only if\\
for some $i<\omega$ we have $w^q\in \pos(w^p,t^p_0,\ldots,t^p_{i-1})$ (if
$i=0$ this means $w^q=w^p$) and $t^q_n\in \Sigma(t^p_{n+i})$ for all
$n<\omega$.  
\medskip

\noindent For a condition $p\in \qfor$ we let $i(p)=\lh(w^p)$.  
\end{definition}

\section{Collapsing creatures}
We will show here that very natural forcing notions of type
$\qfor$ (for a big local and finitary creating
pair $(K,\Sigma)$) collapse $\con$ to $\aleph_0$,  in particular
answering \cite[Question 4.1]{Sh:666}. The main ingredient of the proof is 
similar to the ``negative theory'' presented in \cite[Section
1.4]{RoSh:470}, and Definition \ref{hbad} below should be compared with
\cite[Definition 1.4.4]{RoSh:470} (but the two properties are somewhat
incomparable).  

\begin{definition}
\label{hbad} Let $h:\mbR^{\geq 0}\longrightarrow\mbR^{\geq 0}$ be a
non-decreasing unbounded function and let $(K,\Sigma)$ be a good 
creating pair for $\bH$. We say that $(K,\Sigma)$ is {\em
sufficiently $h$-bad\/} if there are sequences $\bar{m}=\langle
m_i:i<\omega\rangle$, $\bar{A}=\langle A_i:i< \omega\rangle$ and
$\bar{F}=\langle F_i:i<\omega\rangle$ such that
\begin{enumerate}
\item[$(\alpha)$] $\bar{m}$ is a strictly increasing sequence of integers,
$m_0=0$, and
\[(\forall t\in K)(\exists i<\omega)(m^t_\dn=m_i\ \&\ m^t_\up=m_{i+1}),\]
\item[$(\beta)$]  $A_i$ are finite non-empty sets,
\item[$(\gamma)$] $F_i=(F^0_i,F^1_i):A_i\times\prod\limits_{m<m_{i+1}}\bH(m)
\longrightarrow A_{i+1}\times 2$,
\item[$(\delta)$] if $i<\omega$, $t\in K$, $m^t_\dn=m_i$ and $\nor[t]>4$,
then there is $a\in A_i$ such that
\begin{quote}
for every $x\in A_{i+1}\times 2$, for some $s_x\in \Sigma(t)$ we 
have

$\nor[s_x]\geq \min\{h(\nor[t]),h(i)\}$ and

$(\forall u\in\prod\limits_{m<m_i}\bH(m))(\forall
v\in\pos(u,s_x))(F_i(a,v) =x)$.
\end{quote}
\end{enumerate}
\end{definition}

\begin{proposition}
\label{collapse} Suppose that $h:\mbR^{\geq
0}\longrightarrow\mbR^{\geq 0}$ is a non-decreasing unbounded
function, and $(K,\Sigma)$ is a strongly finitary good creating pair for
$\bH$. Assume also that $(K,\Sigma)$ is sufficiently $h$-bad. Then the
forcing notion $\bQ^*_\infty(K,\Sigma)$ collapses $\con$ onto $\aleph_0$.
\end{proposition}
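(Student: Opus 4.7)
The plan is to collapse $\con^V$ by constructing, in the generic extension, a countable family of reals (indexed in $V$) containing every ground-model real; the encoding goes through the maps $F_i$ and the ``addresses'' $a\in A_i$ from sufficient $h$-badness. Fix the name $\dot W=\bigcup_{p\in\dot G}w^p\in\prod_{m<\omega}\bH(m)$ for the generic real, and for each pair $(i,a)$ with $i<\omega$ and $a\in A_i$ define a $\qfor$-name $\dot\eta_{i,a}\in 2^\omega$ by setting $\alpha_i=a$ and, for $j\ge i$, $(\alpha_{j+1},\dot\eta_{i,a}(j-i))=F_j(\alpha_j,\dot W\rest m_{j+1})$. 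The indexing set $\{(i,a):i<\omega,\ a\in A_i\}$ is countable in $V$ since each $A_i$ is finite by clause $(\beta)$.

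The main density claim would be: for every $r\in V\cap 2^\omega$ and every $p\in\qfor$ there exist $q\ge p$ and $(i,a)$ with $q\forces\dot\eta_{i,a}=r$. To prove it, pick $i$ so large that $\nor[t^p_j]>4$ for all $j\ge i$, let $a^*_j\in A_j$ be the witness from clause $(\delta)$ attached to $t^p_j$, and set $a:=a^*_i$. For each $j\ge i$, apply sufficient $h$-badness to $t^p_j$ with target $x_j:=(a^*_{j+1},r(j-i))\in A_{j+1}\times 2$ to obtain $s_j\in\Sigma(t^p_j)$ with $\nor[s_j]\ge\min\{h(\nor[t^p_j]),h(j)\}$ and $F_j(a^*_j,v)=x_j$ for every $v\in\pos(u,s_j)$ (any $u$). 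Let $q$ be the condition with $w^q=w^p$, $t^q_j=t^p_j$ for $j<i$ and $t^q_j=s_j$ for $j\ge i$. Since $\nor[t^p_j]\to\infty$, $j\to\infty$ and $h$ is unbounded, $\nor[t^q_j]\to\infty$, so $q\in\qfor$; $q\ge p$ is immediate from $s_j\in\Sigma(t^p_j)$; and a straightforward induction on $j\ge i$ gives $q\forces\alpha_j=a^*_j$ and $\dot\eta_{i,a^*_i}(j-i)=r(j-i)$, using that $q$ forces $\dot W\rest m_{j+1}\in\pos(\dot W\rest m_j,s_j)$, whence $F_j(a^*_j,\dot W\rest m_{j+1})=x_j$.

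Granted the claim, in $V[G]$ the map $(i,a)\mapsto\dot\eta_{i,a}[G]$ has countable domain (inherited from $V$) and its range contains all of $V\cap 2^\omega$, so $|V\cap 2^\omega|^{V[G]}\le\aleph_0$ as required. The step I would worry about most is the threading of the witnesses: the $a^*_j$ guaranteed by $(\delta)$ is dictated by $t^p_j$ and not freely choosable, so one cannot build a single canonical $\dot\eta\in 2^\omega$ whose $F_j$-iteration automatically lands on $a^*_j$ at each stage. This is exactly why we parameterise by a starting pair $(i,a)$: the freedom in $(\delta)$ is used only to pick $s_{x_j}$ that routes the $F_j$-image through the pre-selected $a^*_{j+1}$, which in turn feeds into the next application of $(\delta)$ to $t^p_{j+1}$. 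Strong finitariness of $(K,\Sigma)$ enters to keep each $A_i$ and each $\bH(m)$ finite, ensuring that the index set is countable in $V$ and that each $\dot\eta_{i,a}$ is a bona fide $\qfor$-name.
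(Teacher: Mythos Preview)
Your argument is correct and in fact more direct than the paper's. The paper, for each finite horizon $k$, builds $a^k_j$ and $s^k_j$ by \emph{downward} induction on $j\le k$ and then invokes K\"onig's Lemma (using that each $\Sigma(t^p_j)$ and each $A_{i+j}$ is finite---this is where strong finitariness is actually used) to extract a single coherent infinite sequence. You instead notice that the quantifier order in clause $(\delta)$ is $\exists a\,\forall x\,\exists s_x$, so the witness $a^*_j$ depends only on $t^p_j$ and not on the target; hence all the $a^*_j$ can be fixed up front and then each $s_j$ chosen to route $F_j(a^*_j,\cdot)$ to $(a^*_{j+1},r(j-i))$. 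No compactness is needed, and a pleasant by-product is that your proof does not actually use strong finitariness at all: the finiteness of $A_i$ comes from clause $(\beta)$, and $|\bH(m)|<\omega$ is a standing hypothesis on $\bH$, so your closing sentence misattributes these. One small indexing slip to fix: the witness attached to $t^p_j$ lies in $A_{i_0+j}$ where $\lh(w^p)=m_{i_0}$, not in $A_j$; either first pass to a stronger condition with all norms above $4$ and let your $i$ be the $\bar m$-index of its root (as the paper does), or carry the offset $i_0$ through the notation.
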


\begin{proof}
The proof is similar to that of \cite[Proposition 1.4.5]{RoSh:470}, but for
reader's convenience we present it fully.

Let $\bar{m},\bar{A}$ and $\bar{F}$ witness that $(K,\Sigma)$ is
sufficiently $h$-bad. For $i<\omega$ and $a\in A_i$ we define
$\bQ^*_\infty( K,\Sigma)$--names $\dot{\rho}_{i,a}$ (for a real in
$\can$) and $\dot{\eta}_{i,a}$ (for an element of
$\prod\limits_{j\geq i}A_j$) as follows:
\[\forces_{\bQ^*_\infty(K,\Sigma)}\mbox{`` }\dot{\eta}_{i,a}(i)=a\ \mbox{
and }\ \dot{\eta}_{i,a}(j)=F^0_{j-1}(\dot{\eta}_{i,a}(j-1),\dot{W}\rest 
m_j) \mbox{ for $j>i$ '',}\] 
and
\[\forces_{\bQ^*_\infty(K,\Sigma)}\mbox{`` }\dot{\rho}_{i,a}\rest i\equiv 0\  
\mbox{ and }\ \dot{\rho}_{i,a}(j)=F^1_j(\dot{\eta}_{i,a}(j),\dot{W} \rest
m_{j+1}) \mbox{ for $j\geq i$ ''.}\] 
Above, $\dot{W}$ is the canonical name for the generic function in
$\prod\limits_{i<\omega}\bH(i)$, i.e., $p\forces_{\bQ^*_\infty(K,\Sigma)}$ ``
$w^p\vartriangleleft\dot{W}\in\prod\limits_{i<\omega}\bH(i)$ ''. We are
going to show that 
\[\forces_{\bQ^*_\infty(K,\Sigma)}\mbox{`` }(\forall r\in\can\cap\bV)(
\exists i<\omega)(\exists a\in A_i)(\forall j\geq
i)(\dot{\rho}_{i,a}(j)= r(j))\mbox{ ''.}\] 
To this end suppose that
$p\in\bQ^*_\infty(K,\Sigma)$ and $r\in\can$. Passing to a stronger
condition if needed, we may assume that $(\forall
j<\omega)(\nor[t^p_j]>4)$. Let $i<\omega$ be such that $\lh(w^p)=
m_i$; then also $m^{t^p_j}_\dn=m_{i+j}$ for $j<\omega$ (remember
\ref{hbad}$(\alpha)$).

Fix $k<\omega$ for a moment. By downward induction on $j\leq k$
choose $s^k_j\in\Sigma(t^p_j)$ and $a^k_j\in A_{i+j}$ such that
\begin{enumerate}
\item[(a)] $\nor[s^k_j]\geq \min\{h(\nor[t^p_j]),h(i+j)\}$ for all $j\leq
k$,
\item[(b)] $(\forall u\in\prod\limits_{m<m_{i+k}}\bH(m))(\forall v\in\pos(u,
s^k_k))(F^1_{i+k}(a^k_k,v)=r(i+k))$,
\item[(c)] for $j<k$:
\[(\forall u\in\!\prod\limits_{m<m_{i+j}}\!\!\bH(m))(\forall v\in\pos(u,
s^k_j))(F^1_{i+j}(a^k_j,v)=r(i+j)\ \&\
F^0_{i+j}(a^k_j,v)=a^k_{j+1}).\]
\end{enumerate}
(Plainly it is possible by \ref{hbad}$(\delta)$.)

Since for each $j<\omega$ both $\Sigma(t^p_j)$ and $A_{i+j}$ are
finite, we may use K\"onig's Lemma to pick an increasing sequence
$\bar{k}=\langle k(\ell):\ell<\omega\rangle$ such that
\[a^{k(\ell+1)}_j=a^{k(\ell')}_j\quad \mbox{ and }\quad s^{k(\ell+1)}_j=
s^{k(\ell')}_j\] for $\ell<\ell'<\omega$ and $j\leq k(\ell)$. Put
$w^q=w^p$ and $t^q_j= s^{k(j+1)}_j$, $b_j=a^{k(j+1)}_j$ for
$j<\omega$. Easily, $q=(w^q,t^q_0, t^q_1,t^q_2,\ldots)$ is a
condition in $\bQ^*_\infty(K,\Sigma)$ stronger than $p$. Also, by
clause (c) of the choice of $s^k_j$, we clearly have
\[(\forall j<\omega)(\forall v\in\pos(w^q,t^q_0,\ldots,t^q_j))(F^0_{i+j}(
b_j,v)=b_{j+1}\ \&\ F^1_{i+j}(b_j,v)=r(i+j)).\] Hence
\[q\forces_{\bQ^*_\infty(K,\Sigma)}\mbox{`` }(\forall j<\omega)(\dot{\eta}_{
i,b_0}(i+j)=b_j\ \&\ \dot{\rho}_{i,b_0}(i+j)=r(i+j))\mbox{ '',}\]
finishing the proof.
\end{proof}

\begin{lemma}
\label{techlem} Suppose that positive integers $N,M,d$ satisfy
$(N-2)\cdot 2^M<d$. Let $A,B$ be finite sets such that $|A|\geq 2^M$ and 
$|B|\leq N$. Then there is a mapping $\hat{F}:A\times {}^{\textstyle
d}M\longrightarrow B$ with the property that:
\begin{enumerate}
\item[$(\circledast)$] if $2\leq \ell\leq M$, $\langle c_i:i<d\rangle\in
\prod\limits_{i<d} [M]^{\textstyle \ell}$, then there is $a\in A$
such that for every $b\in B$, for some $c^b_i\in
[c_i]^{\textstyle\lfloor\ell/2 \rfloor}$ (for $i<d$) we have
\[(\forall u\in\prod\limits_{i<d} c^b_i)(\hat{F}(a,u)=b).\]
\end{enumerate}
\end{lemma}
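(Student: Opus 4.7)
The plan is to give a direct (non-probabilistic) construction of $F$. Since $|A|\geq 2^M$, we may identify $\mathcal{P}(M)$ with a subset of $A$ and define $F$ arbitrarily on the remainder. Enumerate $B=\{b_0,\ldots,b_{|B|-1}\}$ and, for $a\subseteq M$ and $u\in{}^{d}M$, set
\[
F(a,u) \;=\; b_{n(a,u)\,\bmod\,|B|}, \qquad n(a,u):=|\{i<d:u_i\in a\}|.
\]
The guiding observation is that to force $F(a,\cdot)\equiv b_j$ on a product sub-rectangle $\prod_i c_i^b$ it suffices that each $c_i^b$ lie entirely inside $a$ or entirely inside $M\setminus a$; recording this by $\epsilon_i^b\in\{0,1\}$ (with $\epsilon_i^b=1$ iff $c_i^b\subseteq a$), the value of $n(a,u)$ on the sub-rectangle is the constant $\sum_i \epsilon_i^b$, and we need only $\sum_i\epsilon_i^b\equiv j\pmod{|B|}$.

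Call $a\subseteq M$ \emph{balanced at $i$} if $|a\cap c_i|\in[k,\ell-k]$, where $k=\lfloor\ell/2\rfloor$; balance is equivalent to both $c_i\cap a$ and $c_i\setminus a$ having size $\geq k$, so that either value of $\epsilon_i^b$ is realizable. At an unbalanced $i$, $\epsilon_i^b$ is forced to some $\delta_i\in\{0,1\}$. The key reduction is: if $a$ is balanced on a set $D$ of coordinates with $|D|\geq|B|-1$, then property $(\circledast)$ holds for this $\bar c$ with witness $a$. Indeed, the partial sum $\sum_{i\in D}\epsilon_i^b$ ranges over $\{0,1,\ldots,|D|\}$, which is a set of $\geq|B|$ consecutive integers and therefore hits every residue mod $|B|$; so for each $b_j\in B$ we can choose $\bar\epsilon^b\in\{0,1\}^d$ with $\epsilon_i^b=\delta_i$ for $i\notin D$ and $\sum_i\epsilon_i^b\equiv j\pmod{|B|}$. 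Any $c_i^b\in[c_i\cap a^{(\epsilon_i^b)}]^k$ (with $a^{(1)}:=a$, $a^{(0)}:=M\setminus a$) then gives $n(a,u)=\sum_i\epsilon_i^b\equiv j$ on all of $\prod_i c_i^b$.

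The only remaining step is to produce, for each $\bar c\in([M]^\ell)^d$, such a well-balanced $a$, and this is where the lemma's numerical hypothesis is used. By a direct count, for each fixed $i<d$,
\[
|\{a\subseteq M:a\text{ balanced at }i\}| \;=\; 2^{M-\ell}\!\!\sum_{m=k}^{\ell-k}\!\binom{\ell}{m} \;\geq\; 2^{M-\ell}\binom{\ell}{k} \;\geq\; \frac{2^M}{\ell+1},
\]
using that the central binomial coefficient is at least the average $2^\ell/(\ell+1)$. Summing over $i$ and dividing by $2^M$, the average number of balanced coordinates over $a\in\mathcal{P}(M)$ is at least $d/(\ell+1)\geq d/(M+1)$; combining the hypotheses $d>N\cdot 2^M$ and $M+1\leq 2^M$ yields $d/(M+1)>N\geq|B|$, so some $a$ has $|D|\geq N\geq|B|-1$, as required. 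The main obstacle is spotting the parity-based construction of $F$; once it is in place, both the reduction and the averaging are routine.
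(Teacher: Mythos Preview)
Your proof is correct and follows essentially the paper's approach: the function $F$ you define is identical to the paper's (the paper writes $F(h,u)\equiv\sum_{i<d}h(u(i))\bmod N$, which is exactly your $n(a,u)\bmod |B|$ once $a\subseteq M$ is identified with its characteristic function $h$), and the remainder of the argument---freeze the contribution at unbalanced coordinates and adjust freely on the balanced ones to hit any target residue---is the same. The only cosmetic difference is in locating a well-balanced $a$: you run an averaging argument using the estimate $|\{a:\text{balanced at }i\}|\geq 2^M/(\ell+1)$, whereas the paper simply selects one balanced $h_i$ per coordinate and pigeonholes over the $2^M$ possible $h$'s to find one recurring at least $d/2^M>N$ times, which is slightly quicker and sidesteps the binomial bound.
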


\begin{proof}
Plainly we may assume that $|A|=2^M$ and $|B|=N\geq 2$, and then we may 
pretend that $A={}^{\textstyle M}2$ and $B=N$.

For $h\in A={}^{\textstyle M}2$ and $u\in {}^{\textstyle d}M$ we let
$\hat{F}(h,u)<N$ be such that
\[\hat{F}(h,u)\equiv\sum_{i<d}h(u(i))\mod N.\]
This defines the function $\hat{F}:A\times {}^{\textstyle d}M
\longrightarrow B=N$, and we are going to show that it has the property
stated in $(\circledast)$. To this end suppose that $2\leq \ell\leq M$ and 
$\langle c_i:i<d\rangle\in \prod\limits_{i<d}[M]^{\textstyle\ell}$.
For each $i<d$ we may choose $h_i\in A$ so that
\[|(h_i)^{-1}[\{0\}]\cap c_i|\geq\lfloor\ell/2\rfloor\quad \mbox{ and }\quad 
|(h_i)^{-1}[\{1\}]\cap c_i|\geq\lfloor\ell/2\rfloor\] Then, for some
$h\in A$ and $I\subseteq d$ we have $|I|\geq d/2^M$ and $h_i=h$ for
$i\in I$. For $i\in d\setminus I$ we may pick $c^*_i\in [c_i]^{
\textstyle\lfloor\ell/2\rfloor}$ and $j_i<2$ such that $h\rest
c^*_i\equiv j_i$.

Now, suppose $b\in B$. Take a set $J\subseteq I$ such that
\[|J|+\sum_{i\in d\setminus I} j_i\equiv b\mod N\]
(possible as $|I|\geq d/2^M>N-2$, so $|I|\geq N-1$). By our choices, we may
pick $c^b_i\in [c_i]^{\textstyle \lfloor\ell/2\rfloor}$ (for $i\in I$)
such that
\begin{quote}
if $i\in J$, then $h\rest c^b_i\equiv 1$,\\
if $i\in I\setminus J$, then $h\rest c^b_i\equiv 0$.
\end{quote}
For $i\in d\setminus I$ we let $c^b_i=c^*_i$ (selected earlier). It
should be clear that then
\[(\forall u\in\prod\limits_{i<d} c^b_i)(\hat{F}(h,u)=b),\]
as needed.
\end{proof}

\begin{example}
\label{excol} Let $\bar{m}=\langle m_i: i<\omega\rangle$ be an
increasing sequence of integers such that $m_0=0$ and $m_{i+1}-
m_i>4^{i+3}$. Let $h(\ell)= \lfloor\ell/2\rfloor$ for $\ell<\omega$. 

For $j<\omega$ we let $\bH^0_{\bar{m}}(j)=i+2$, where $i$ is such
that $m_i\leq j<m_{i+1}$. Let $K^0_{\bar{m}}$ consist of all (forgetful) 
creatures $t\in \CR[\bH^0_{\bar{m}}]$ such that
\begin{itemize}
\item $\dis[t]=\langle i^t,\langle Z^t_j:m_{i^t}\leq j<m_{i^t+1}\rangle
\rangle$ for some $i^t<\omega$ and $\emptyset\neq Z^t_j\subseteq
\bH^0_{\bar{m}}(j)$ (for $m_{i^t}\leq j<m_{i^t+1}$),
\item $\nor[t]=\min\{|Z^t_j|:m_{i^t}\leq j <m_{i^t+1}\}$,
\item $\pos(t)=\prod\limits_{j\in [m_{i^t},m_{i^t+1})} Z^t_j$.
\end{itemize}
Finally, for $t\in K^0_{\bar{m}}$ we let
\[\Sigma^0_{\bar{m}}(t)=\{s\in K^0_{\bar{m}}: i^t=i^s\ \&\ (\forall j\in
[m_{i^t},m_{i^t+1}))(Z^s_j\subseteq Z^t_j)\}.\]
Then $(K^0_{\bar{m}},\Sigma^0_{\bar{m}})$ is a strongly finitary and
sufficiently $h$-bad good creating pair for $\bH^0_{\bar{m}}$. Consequently,
the forcing notion $\bQ^*_\infty(K^0_{\bar{m}},\Sigma^0_{\bar{m}})$
collapses $\con$ onto $\aleph_0$.
\end{example}

\begin{proof}
It should be clear that $(K^0_{\bar{m}},\Sigma^0_{\bar{m}})$ is a strongly
finitary good creating pair for $\bH^0_{\bar{m}}$. To show that it is
sufficiently $h$-bad let $A_i={}^{\textstyle i{+}2}2$, $B_i=A_{i+1}\times 2= 
{}^{\textstyle i{+}3}2\times 2$ and $M_i=i+2$. Since $|B_i|\cdot 2^{M_i}=
2^{i+4+i+2}<m_{i+1}-m_i\stackrel{\rm def}{=} d_i$, we may apply Lemma
\ref{techlem} for $A=A_i$, $B=B_i$, $M=M_i$ and $d=d_i$ to get functions
$\hat{F}_i:A_i\times {}^{\textstyle d_i}M_i\longrightarrow B_i$ with the
property $(\circledast)$ (for those parameters). For $a\in A_i$ and $v\in 
\prod\limits_{j<m_{i+1}} \bH^0_{\bar{m}}(j)$ we interpret $F_i(a,v)$ as
$\hat{F}_i(a,u)$ where $u\in {}^{\textstyle d_i}(i+1)$ is given by
$u(j)=v(m_i+j)$ for $j<d_i$. It is straightforward to show that $\bar{m}$,
$\bar{A}=\langle A_i:i<\omega\rangle$ and $\bar{F}=\langle F_i:i<\omega
\rangle$ witness that $(K^0_{\bar{m}},\Sigma^0_{\bar{m}})$ is $h$-bad. 
\end{proof}

The above example (together with Proposition \ref{collapse}) easily
gives the answer to \cite[Question 4.1]{Sh:666}. To show how our problem
reduces to this example, let us recall the following.

\begin{definition}
[See {\cite[Definition 4.2.1]{RoSh:470}}] 
\label{sum} 
Suppose $0<m<\omega$ and for $i<m$ we have $t_i\in\CR[\bH]$ such that
$m^{t_i}_{\up}\leq m^{t_{i+1}}_{\dn}$. Then we define {\em the sum of the
  creatures $t_i$} as a creature $t=\Sigma^{\bsum}(t_i:i<m)$ such that (if
well defined then):
\begin{enumerate}
\item[(a)] $m^t_{\dn}=m^{t_{0}}_{\dn}$, $m^t_{\up}=m^{t_{m-1}}_{\up}$,
\item[(b)] $\val[t]$ is the set of all pairs $\langle h_1, h_2\rangle$ such
that:
\begin{quotation}
\noindent $\lh(h_1)=m^t_{\dn}$, $\lh(h_2)=m^t_{\up}$,
$h_1\vartriangleleft h_2$,

\noindent and $\langle h_2\rest m^{t_i}_{\dn}, h_2\rest
m^{t_i}_{\up}\rangle\in\val[t_i]$ for $i<m$,

\noindent and $h_2\rest [m^{t_i}_{\up}, m^{t_{i+1}}_{\dn})$ is
identically zero for $i<m-1$,
\end{quotation}
\item[(c)] $\nor[t]=\min\{\nor[t_i]:i<m\}$,
\item[(d)] $\dis[t]=\langle t_i: i<m\rangle$.
\end{enumerate}
If for all $i<m-1$ we have $m^{t_i}_{\up}=m^{t_{i+1}}_{\dn}$, then
we call the sum {\em tight}.
\end{definition}

\begin{definition}
\label{msumar} Let $(K,\Sigma)$ be a local good creating pair for
$\bH$, and let $\bar{m} =\langle m_i:i<\omega\rangle$ be a strictly
increasing sequence with $m_0=0$. We define the {\em
$\bar{m}$--summarization $(K^{\bar{m}},
\Sigma^{\bar{m}},\bH^{\bar{m}})$ of $(K,\Sigma,\bH)$} as follows:
\begin{itemize}
\item $\bH^{\bar{m}}(i)=\prod\limits_{m=m_i}^{m_{i+1}-1}\bH(m)$,
\item $K^{\bar{m}}$ consists of all tight sums $\Sigma^{\bsum}(t_\ell:m_i
\leq \ell<m_{i+1})$ such that $i<\omega$, $t_\ell\in K$,
$m^{t_\ell}_\dn= \ell$,
\item if $t=\Sigma^{\bsum}(t_\ell:m_i\leq \ell<m_{i+1})\in K^{\bar{m}}$,
then $\Sigma^{\bar{m}}(t)$ consists of all creatures $s\in
K^{\bar{m}}$ such that $s=\Sigma^{\bsum}(s_\ell:m_i\leq
\ell<m_{i+1})$ for some $s_\ell \in \Sigma(t_\ell)$ (for
$\ell=m_i,\ldots,m_{i+1}-1$).
\end{itemize}
\end{definition}

\begin{proposition}
\label{prebalu}
Assume that $(K,\Sigma)$ is a local good creating pair for $\bH$,
$\bar{m}= \langle m_i:i<\omega\rangle$ is a strictly increasing
sequence with $m_0=0$. Then:
\begin{enumerate}
\item $(K^{\bar{m}},\Sigma^{\bar{m}})$ is a good creating pair for
  $\bH^{\bar{m}}$;  
\item the forcing notion $\bQ^*_\infty(K^{\bar{m}},\Sigma^{\bar{m}})$ can be
embedded as a dense subset of the forcing notion $\bQ^*_\infty(K, \Sigma)$
(so the two forcing notions are equivalent).
\end{enumerate}
\end{proposition}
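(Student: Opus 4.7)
The plan is to verify the two clauses directly from the definitions. For clause (1), I would check conditions (a)--(c) of Definition \ref{crpairs}(2) for $(K^{\bar{m}},\Sigma^{\bar{m}})$, viewing a creature $t=\Sigma^{\bsum}(t_\ell:m_i\leq \ell<m_{i+1})\in K^{\bar{m}}$ as an element of $\CR[\bH^{\bar{m}}]$ with $m^t_\dn=i$ and $m^t_\up=i+1$ via the canonical bijection $\prod_{m<m_i}\bH(m)\cong\prod_{j<i}\bH^{\bar{m}}(j)$. Fullness of such $t$ is obtained by iterating fullness of each $t_\ell$: tightness of the sum lets the intermediate values $h_2\rest m^{t_\ell}_\up$ serve as input to the next constituent without any zero-padding. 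The reflexivity $t\in\Sigma^{\bar{m}}(t)$ with preserved $m$'s, and the transitivity of $\Sigma^{\bar{m}}$, are immediate from the corresponding properties of $\Sigma$ applied blockwise.

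For clause (2), I would define the embedding $\iota:\bQ^*_\infty(K^{\bar{m}},\Sigma^{\bar{m}})\to\bQ^*_\infty(K,\Sigma)$ by \emph{unblocking}. Given $p=(w^p, t^p_0, t^p_1,\ldots)$ with $t^p_j=\Sigma^{\bsum}(s^{p,j}_\ell: m_{i_0+j}\leq \ell<m_{i_0+j+1})$, where $i_0=\lh(w^p)$, set
\[\iota(p)=(w^{p,*}, s^{p,0}_{m_{i_0}},\ldots,s^{p,0}_{m_{i_0+1}-1}, s^{p,1}_{m_{i_0+1}},\ldots),\]
with $w^{p,*}$ the image of $w^p$ under the canonical identification. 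This is a condition in $\bQ^*_\infty(K,\Sigma)$: the $m_\dn/m_\up$ chain of the listed creatures matches because each $s^{p,j}_\ell$ is local with $m^{s^{p,j}_\ell}_\dn=\ell$, and the norm divergence is preserved since $\nor[s^{p,j}_\ell]\geq\nor[t^p_j]$ by Definition \ref{sum}(c) while each block contains only finitely many $s$'s. Unpacking the definition of $\val$ for a tight sum and of $\Sigma^{\bar{m}}$, I would then verify that $p\leq q$ is equivalent to $\iota(p)\leq\iota(q)$: the trunk-length difference on the right, necessarily of the form $m_{i_1}-m_{i_0}$, corresponds to an exact block-shift on the left, while per-block containment in $\Sigma^{\bar{m}}$ is the same as componentwise containment in $\Sigma$.

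For density, given $r\in\bQ^*_\infty(K,\Sigma)$ with $\ell_0=\lh(w^r)$, I would choose the minimal $i_0$ with $m_{i_0}\geq \ell_0$, extend $w^r$ to some $w^*\in\pos(w^r, t^r_0,\ldots,t^r_{m_{i_0}-\ell_0-1})$ using fullness (trivially if $m_{i_0}=\ell_0$), group the remaining creatures $t^r_k$ into blocks indexed by $k\in[m_{i_0+j}-\ell_0,\,m_{i_0+j+1}-\ell_0)$, and take tight sums to form $t^p_j$. The resulting $p$ lies in $\bQ^*_\infty(K^{\bar{m}},\Sigma^{\bar{m}})$ (norm divergence survives because each block is finite) and satisfies $\iota(p)\geq r$. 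Combined with the equivalence $p\leq q\Leftrightarrow \iota(p)\leq\iota(q)$, this shows $\iota$ is a dense embedding, preserving incompatibility in both directions, so the two forcing notions are equivalent. The main bookkeeping challenge is keeping the two indexings straight---the summarized index $i$ versus the associated integer $m_i$---but once the canonical identification is fixed, all remaining verifications are routine.
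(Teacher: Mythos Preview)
Your proposal is correct and is exactly the routine verification the paper has in mind; in fact the paper gives no proof at all for this proposition, treating both clauses as immediate from the definitions. Your unblocking map $\iota$, the block-wise check of $\Sigma^{\bar m}$ against $\Sigma$, and the density argument via extending the trunk to the next $m_{i_0}$ are the natural steps, and your remark that the only real work is keeping the two indexings ($i$ versus $m_i$) aligned is precisely why the authors felt safe omitting the argument.
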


\begin{corollary}\label{balu}
Let $\bH:\omega\longrightarrow\omega$ be increasing, $\bH(0)\geq 2$, and 
let $g:\mbR^{\geq 0}\longrightarrow\mbR^{\geq 0}$ be an unbounded
non-decreasing function. We define $(K^{\bH}_g,\Sigma^{\bH}_g)$ as follows:
$K^{\bH}_g$ consists of all creatures $t\in\CR[\bH]$ such that
\begin{itemize}
\item $\dis[t]=\langle i^t,A^t\rangle$ for some $i^t<\omega$ and
$\emptyset\neq A^t\subseteq\bH(i^t)$,
\item $\nor[t]=g(|A^t|)$, $m^t_\dn=i^t$, $m^t_\up=i^t+1$ and $\pos(t)=A^t$. 
\end{itemize}
For $t\in K^{\bH}_g$ we let 
\[\Sigma^{\bH}_g(t)=\{s\in K^{\bH}_g: i^t=i^s\ \&\ A^s\subseteq A^t\}.\]
Then $(K^{\bH}_g,\Sigma^{\bH}_g)$ is a local strongly finitary good 
creating pair for $\bH$. The forcing notion $\bQ^*_\infty(K^{\bH}_g,
\Sigma^{\bH}_g)$ collapses $\con$ onto $\aleph_0$. In particular, the
forcing notion $\bQ$ defined in the Introduction is not proper. 
\end{corollary}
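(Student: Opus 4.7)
First I would verify $(K^{\bH}_g,\Sigma^{\bH}_g)$ is a local strongly finitary good creating pair; this is a routine check against Definition \ref{crpairs}, with fullness, $t\in\Sigma^{\bH}_g(t)$, preservation of $m^t_\dn,m^t_\up$, and transitivity all reading off the defining formulas, locality being $m^t_\up=m^t_\dn+1$, and strong finitariness following from the finiteness of each $\bH(i)$.

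For the collapse, the local pair itself cannot be sufficiently $h$-bad for any reasonable $h$: a single local creature $t$ with $\nor[t]>4$ only forces $|A^t|\geq k_0:=\min\{k:g(k)>4\}$, a fixed constant, leaving no room for the growing-in-$i$ monochromatic pieces demanded by Definition \ref{hbad}$(\delta)$. The plan is therefore to pass to an $\bar m$-summarization: pick a strictly increasing $\bar m=\langle m_i:i<\omega\rangle$ with $m_0=0$ and $d_i:=m_{i+1}-m_i$ growing rapidly enough (depending on $\bH$ and $g$), form $(K^{\bar m},\Sigma^{\bar m})$, and verify that it is sufficiently $h$-bad for some unbounded non-decreasing $h$ derived from $g$. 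Since by the preceding proposition $\bQ^*_\infty(K^{\bH}_g,\Sigma^{\bH}_g)$ and $\bQ^*_\infty(K^{\bar m},\Sigma^{\bar m})$ are equivalent, Proposition \ref{collapse} then delivers the collapse of $\con$ onto $\aleph_0$.

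The $h$-badness of $(K^{\bar m},\Sigma^{\bar m})$ is established along the lines of Example \ref{excol}: for $t=\Sigma^{\bsum}(t_\ell:m_i\leq\ell<m_{i+1})\in K^{\bar m}$ with $\nor[t]>4$, one has $|A^{t_\ell}|\geq g^{-1}(\nor[t])$; first shrink to a creature in $\Sigma^{\bar m}(t)$ with $|A^{t'_\ell}|=n$ constant across $\ell$, then use a hash-type family of maps $\bH(\ell)\to M_i=i+2$ (equivalently, copies of $M_i$ inside $A^{t'_\ell}$ parametrised by $a\in A_i$) to pull back the function $F$ of Lemma \ref{techlem} to the $F_i$ needed here, and verify condition $(\delta)$ with $h(\alpha):=g(\lfloor g^{-1}(\alpha)/2\rfloor)$, which is unbounded and non-decreasing. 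The main technical obstacle is organising the hash family / copies so that $|A_i|$ stays finite and the inequality $|B_i|\cdot 2^{M_i}<d_i$ of Lemma \ref{techlem} holds across all $i$; this forces $\bar m$ to grow very rapidly in a manner depending on $\bH$, but is always achievable by recursion on $i$. Finally, the forcing $\bQ$ of the introduction is---up to the forcing-trivial choice of a finite initial segment---the instance $\bQ^*_\infty(K^{\bH}_g,\Sigma^{\bH}_g)$ with $\bH(n)=2^n$ and $g=\mathrm{id}$, so $\bQ$ collapses $\con$ onto $\aleph_0$, in particular collapses $\aleph_1$, and therefore is not proper.
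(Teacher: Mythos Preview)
Your outline diverges from the paper at the key step, and the claim ``always achievable by recursion on $i$'' is where it breaks. You want the \emph{full} summarization $((K^{\bH}_g)^{\bar m},(\Sigma^{\bH}_g)^{\bar m})$ to be sufficiently $h$-bad for one fixed $\bar m$, but a summarized creature at level $i$ carries arbitrary nonempty $A^{t_\ell}\subseteq\bH(\ell)$, with $\bH(\ell)$ possibly enormous (e.g.\ $2^\ell$ for the forcing $\bQ$ of the Introduction). The functions $F_i$ of Definition~\ref{hbad} are fixed in advance, so your hash maps $\bH(\ell)\to M_i$ must either be fixed too---and then a hostile $A^{t_\ell}$ may sit inside a single fibre---or be encoded in $a\in A_i$, which forces $|A_i|$ to depend on $\bH\rest[m_i,m_{i+1})$. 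In the second case the bound $|A_{i+1}\times 2|\cdot 2^{M_i}<d_i$ (or any variant of it) involves $m_{i+2}$ through $|A_{i+1}|$, and the forward recursion for $\bar m$ becomes circular; for fast-growing $\bH$ one checks directly that no choice of $\bar m$ can satisfy it.

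The paper never attempts this. Instead, given an arbitrary condition $p$ it first passes to $q\geq p$ and chooses $\bar m$ \emph{depending on $p$} so that $|A^{t^q_k}|=i{+}2$ exactly whenever $m_i\leq m^{t^q_k}_\dn<m_{i+1}$ (possible since $|A^{t^p_k}|\to\infty$ and one may shrink freely). Viewed through the $\bar m$-summarization, the cone above $q$ is then isomorphic to $\bQ^*_\infty(K^0_{\bar m},\Sigma^0_{\bar m})$ of Example~\ref{excol} above its minimal condition with the same root: once every component set has size exactly $i{+}2$, further shrinking is just choosing subsets of an $(i{+}2)$-element set, which is literally the situation of Example~\ref{excol}. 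Since that forcing collapses $\con$, every cone of $\bQ^*_\infty(K^{\bH}_g,\Sigma^{\bH}_g)$ does, and hence so does the whole forcing. No global $h$-badness of the summarization is claimed or needed.
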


\begin{proof}
Let $p\in\bQ^*_\infty(K^{\bH}_g,\Sigma^{\bH}_g)$. Plainly,
$\lim\limits_{i\to\infty} |A^{t^p_i}|=\infty$, so we may find a
condition $q\geq p$ and an increasing sequence $\bar{m}=\langle
m_i:i<\omega\rangle$ such that
\begin{itemize}
\item $m_0=0$, $m_1=\lh(w^q)$, $m_{i+1}-m_i>4^{i+3}$,
\item if $m_i\leq m^{t^q_k}_\dn <m_{i+1}$, then $|A^{t^q_k}|=i+2$.
\end{itemize}
Now we define a condition $q^*$ in $\bQ^*_\infty((K^{\bH}_g)^{\bar{m}},(
\Sigma^{\bH}_g)^{\bar{m}})$ by
\[w^{q^*}=w^q,\qquad t^{q^*}_i=\Sigma^\bsum(t^q_k: m_{i+1}\leq k<m_{i+2})
\quad\mbox{ (for $i<\omega$)}.\]  
The forcing notion $\bQ^*_\infty(K^{\bH}_g,\Sigma^{\bH}_g)$ above the
condition $q$ is equivalent to the forcing notion $\bQ^*_\infty((
K^{\bH}_g)^{\bar{m}},(\Sigma^{\bH}_g)^{\bar{m}})$ above $q^*$.  Plainly,
$\bQ^*_\infty((K^{\bH}_g)^{\bar{m}},(\Sigma^{\bH}_g)^{\bar{m}})$ above $q^*$
is isomorphic to $\bQ^*_\infty(K^0_{\bar{m}},\Sigma^0_{\bar{m}})$ of Example
\ref{excol} above the minimal condition $r$ with $w^r=w^{q^*}$. The
assertion follows now by the last sentence of \ref{excol}.
\end{proof}

\begin{remark}
\begin{enumerate}
\item If, e.g., $g(x)=\log_2(x)$ then the creating pair $(K^{\bH}_g,
  \Sigma^{\bH}_g)$ is big (see \cite[Definition 2.2.1]{RoSh:470}), and we
  may even get ``a lot of bigness''.  Thus the bigness itself is not
  enough to guarantee properness of the resulting forcing notion.
\item Forcing notions of the form $\bQ^*_\infty(K,\Sigma)$ are special cases
  of $\bQ^*_f(K,\Sigma)$ (see \cite[Definition 1.1.10 and Section
  2.2]{RoSh:470}). However if the function $f$ is growing very fast (much
  faster than $\bH$) then our method does no apply. Let us recall that if 
  $(K,\Sigma)$ is simple, finitary, big and has the Halving Property, and
  $f:\omega\times\omega\longrightarrow \omega$ is $\bH$--fast (see
  \cite[Definition 1.1.12]{RoSh:470}), then $\bQ^*_f(K,\Sigma)$ is
  proper. Thus one may wonder if we may omit halving - can the forcing
  notion $\bQ^*_f(K^{\bH}_g,\Sigma^{\bH}_g)$ be proper for $\bH$ and $f$
  suitably ``fast''?
\end{enumerate}
\end{remark}
\vspace{5ex}

\section{Properness from Halving}
It was shown in \cite[Theorem 2.2.11]{RoSh:470} that halving and bigness
(see \cite[Definitions 2.2.1, 2.2.7]{RoSh:470}) imply properness of the
forcings $\bQ^*_f(K,\Sigma)$ (for fast $f$). It occurs that if we have a
stronger version of halving, then we may get the properness of $\qfor$ even
without any bigness assumptions.   

\begin{definition}
\label{half}
Let $(K,\Sigma)$ be a forgetful good creating pair
\begin{enumerate}
\item Let $t\in K$ and $\varepsilon>0$. We say that a creature $t^*\in
  \Sigma (t)$ is an $\varepsilon$--\textbf{half} of $t$ if the following
  hold:  
\begin{enumerate}
\item[(i)] $\nor[t^*]\geq\nor[t]-\varepsilon$, and
\item[(ii)] if $s\in\Sigma(t^*)$ and $\nor[s]>1$, then
we can find $t_0\in\Sigma (t)$ such that 
\[\nor[t_0]\geq \nor[t]-\varepsilon\quad\mbox{\rm and}\quad
\pos(t_0)\subseteq \pos(s).\] 
\end{enumerate}
\item Let $\bar{\varepsilon}=\langle\varepsilon_i:i<\omega\rangle$ be a
  sequence of positive real numbers and $\bar{m}=\langle
  m_i:i<\omega\rangle$ be a strictly increasing sequence of integers with
  $m_0=0$. We say that the pair $(K,\Sigma)$ has
  the {\em $(\bar{\varepsilon},\bar{m})$--halving property\/} if for every
  $t\in K$ with $m_i\leq m^t_\dn$ and $\nor[t]\geq 2$ there exists an
  $\varepsilon_i$--\textbf{half} of $t$ in $\Sigma(t)$.
\end{enumerate}
\end{definition}

\begin{definition}
Let $(K,\Sigma)$ be a good creating pair. Suppose that $p\in\qfor$ and $I
\subseteq \qfor$ is open dense. We say that $p$ {\em essentially belongs
  to\/} $I$, written $p\in^* I$, if there exists $i_*<\omega$ such that
for every $v\in\pos(w^p,t^p_0,\ldots,t^p_{i_*-1})$ we have 
$(v,t^p_{i_*},t^p_{i_*+1},t^p_{i_*+2},\ldots)\in I$
\end{definition}

Note that if $I\subseteq \qfor$ is open dense, $p\in^* I$ and $p\leq q$,
then also $q\in^* I$. 

\begin{theorem}
\label{getproper}
Let $\bar{\varepsilon}=\langle \varepsilon_i:i<\omega\rangle$ be a
decreasing sequence of positive numbers and $\bar{m}=\langle
m_i:i<\omega\rangle$ be a strictly increasing sequence of integers with
$m_0=0$. Assume that for each $i<\omega$ 
\[|\prod\limits_{n<m_i}\bH(n)|\leq 1/\varepsilon_i.\]
Let $(K,\Sigma)$ be a good creating pair for $\bH$ and suppose that
$(K,\Sigma)$ is local, forgetful and has the
$(\bar{\varepsilon},\bar{m})$--halving property. Then the forcing notion
$\qfor$ is proper. 
\end{theorem}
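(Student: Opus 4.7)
The plan is to prove properness by a standard fusion argument inside a countable elementary submodel, with the strong halving property acting as the engine for deciding open dense sets without losing too much norm.

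Let $N\prec H(\chi)$ be countable with $p, (K,\Sigma), \bH, \bar\varepsilon, \bar m\in N$, and enumerate as $(I_n)_{n<\omega}$ the open dense subsets of $\qfor$ lying in $N$. First I would replace $p$ by a stronger condition (still in $N$) ensuring $\nor[t^p_j]$ grows rapidly enough to absorb future losses. Then I would build a fusion sequence $(p_n)_{n<\omega}\subseteq N$ with strictly increasing stable fronts $(i_n)_{n<\omega}$ satisfying $w^{p_n}=w^p$, $t^{p_{n+1}}_j = t^{p_n}_j$ for $j<i_{n+1}$ (stability of the initial front), $t^{p_{n+1}}_j\in\Sigma(t^{p_n}_j)$ for $j\ge i_{n+1}$ with small norm loss, and $p_{n+1}\in^* I_n$ with witness $i_{n+1}$. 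Define $q$ by $w^q=w^p$ and $t^q_j = t^{p_n}_j$ for any $n$ with $i_n>j$; the hypothesis $|\prod_{\ell<m_i}\bH(\ell)|\le 1/\varepsilon_i$ (together with $|\bH(\ell)|\ge 2$) forces $\varepsilon_i$ to decay rapidly, so the cumulative norm loss at each coordinate is finite and $q$ remains a condition. Genericity over $N$ is then routine: $q\ge p_{n+1}$ implies $q\in^* I_n$, and because $\pos(w^p,t^q_0,\ldots,t^q_{i_{n+1}-1})$ is a finite subset of $N$, every $v$ in it yields a condition $(v,t^q_{i_{n+1}},\ldots)\in I_n\cap N$ compatible with $q$.

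The heart of the argument is the inductive step: given $p_n$ and $I=I_n$, construct $p_{n+1}$. I would pick $i_{n+1}$ large enough that $\nor[t^{p_n}_j]\ge 2$ for $j\ge i_{n+1}$, then form the halved condition by replacing each $t^{p_n}_j$ with $j\ge i_{n+1}$ by an $\varepsilon_{a(j)}$-half $(t^{p_n}_j)^*$ (where $a(j)$ is the index with $m_{a(j)}\le m^{t^{p_n}_j}_\dn$). Enumerate the finite set $V=\pos(w^p,t^{p_n}_0,\ldots,t^{p_n}_{i_{n+1}-1})$ as $\{v_1,\ldots,v_M\}$; the key numerical input from the hypothesis is $M\le 1/\varepsilon_{a(i_{n+1})}$. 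I would then process the $v_m$'s iteratively, maintaining at step $m$ a tail $\bar s^{(m)}$ of creatures $s^{(m)}_j\in\Sigma(t^{p_n}_{i_{n+1}+j})$ with norm only slightly less than the original, such that for every $\ell\le m$ the condition $(v_\ell,\bar s^{(m)})$ lies (essentially) in $I$. At step $m$ one halves $\bar s^{(m-1)}$ again, extends $(v_m, (\bar s^{(m-1)})^*)$ by density into some $R_m\in I$, converts $R_m$'s high-norm tail via the halving-recovery clause into creatures in $\Sigma(t^{p_n}_{i_{n+1}+j})$, and defines $\bar s^{(m)}$ accordingly. The final tail $\bar s^{(M)}$ (after one last recovery) becomes $t^{p_{n+1}}_{i_{n+1}+j}$ while $t^{p_{n+1}}_j = t^{p_n}_j$ for $j<i_{n+1}$.

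The main obstacle is the consistency of this iterative shrinking. Different $v_m$'s may force $R_m$ to use a positive-length front $\ell_m>0$ extending $v_m$ by extra coordinates, and the final tail must witness membership in $I$ for every $v\in V$ simultaneously. The strong halving property is precisely the critical ingredient: after using an $R_m$-decision and recovering, the creatures are back in $\Sigma(t^{p_n}_j)$ with norm within $\varepsilon_{a(j)}$ of the original, hence they remain halvable for the next iteration's decision about $v_{m+1}$. The openness of $I$ is what preserves the earlier $R_\ell$-witnesses under further shrinkings. The total norm loss per coordinate during a single fusion step is bounded by $(M+1)\varepsilon_{a(j)}\le 2$ thanks to the quantitative hypothesis on $|V|$, and the rapid decay of $\varepsilon_i$ keeps the long-term loss controlled. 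Without bigness, this staged use of halving-and-recovery is precisely what makes the argument go through.
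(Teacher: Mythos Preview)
Your fusion scheme and norm accounting are close to the paper's, but there is a genuine gap in the inductive step. You assert that after processing all the $v_m$ you obtain $p_{n+1}\in^* I_n$. But when you extend $(v_m,(\bar s^{(m-1)})^*)$ by density to $R_m\in I_n$, nothing forces $w^{R_m}=v_m$, nor does it force $\nor[t^{R_m}_j]>1$ for all $j$. The recovery clause in the definition of an $\varepsilon$--half requires $\nor[s]>1$; at positions where $R_m$ has norm $\le 1$ (or which $w^{R_m}$ has already absorbed) you cannot produce a creature in $\Sigma(t^{p_n}_{\cdot})$ of good norm whose $\pos$ is contained in $R_m$'s information there. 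If you keep the halved creature at such a position you sever the link to $R_m$ and lose $(v_m,\bar s^{(m)})\in^* I_n$; if you collapse to a singleton along $w^{R_m}$ you destroy the norm. So the invariant ``$(v_\ell,\bar s^{(m)})$ lies essentially in $I$ for all $\ell\le m$'' cannot in general be maintained.

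The paper avoids exactly this by weakening the target to a dichotomy: for each $v$ one shows only that either $q^{[v]}\in^* I$, or else there is no $r\ge q^{[v]}$ with $w^r=v$, $r\in I$ and all norms $>1$. One halves, and \emph{if} such an $r$ exists one recovers along it; otherwise one simply records the dead end. Iterating over the at most $1/\varepsilon_i$ many $v\in\prod_{n<m_i}\bH(n)$ costs at most $1$ in norm. Crucially, each open dense $I_j$ is then processed at \emph{infinitely many} levels (via a bookkeeping surjection with infinite fibres), not just once. Predensity of the countable set $\{q^{[v]}:q^{[v]}\in I_j\}$ above the fusion $q$ is argued a posteriori: given $r^*\ge q$, extend to $r_0\in I_j$, pick one of the infinitely many processed levels past which $r_0$ has all norms $>1$, and let $v$ extend $w^{r_0}$ to that level; then $r_0^{[v]}$ itself witnesses the existential in the dichotomy, forcing $q^{[v]}\in^* I_j$. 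Your one-pass-per-$I_n$ scheme, even with the dichotomy added, would not support this last step, since $w^{r_0}$ may already lie beyond the single level at which $I_n$ was processed.
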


\begin{proof}
We start with two technical claims. 

\begin{claim}
\label{pluto}
Let $a\geq 2$ and $I\subseteq \qfor$ be open dense. Furthermore suppose that
$p\in \qfor$ and $i<\omega$ is such that $i(p)\leq m_i$ and $\nor[t^p_n]>a$
for every $n\geq m_i -i(p)$. Finally let $v\in\prod\limits_{n<m_i}
\bH(n)$. Then there exists $q\in\qfor$ such that  
\begin{enumerate}
\item[(a)] $p\leq q$, $w^p=w^q$ and $t^p_n=t^q_n$ for every $n<m_i-i(p)$; 
\item[(b)] $\nor[t^q_n]\geq a-\varepsilon_i$ for every $n\geq m_i-i(p)$;   
\item[(c)] either, letting $q^{[v]}=(v,t^q_{m_i-i(p)}, t^q_{m_i-i(p)+1},
  t^q_{m_i-i(p)+2},\ldots)$, $q^{[v]}\in^* I$ or else there is no $r\geq
  q^{[v]}$ such that $r \in I$, $w^r=v$ and $\nor[t^r_n]>1$ for every $n$. 
\end{enumerate}
\end{claim}

\begin{proof}[Proof of the Claim]
We know that $(K,\Sigma)$ has the $(\bar{\varepsilon},\bar{m})$--halving
property and therefore for each $n\geq m_i-i(p)$ we may choose an
$\varepsilon_i$--half $t^{q_0}_n\in\Sigma(t^p_n)$ of $t^p_n$. For
$n<m_i-i(p)$ put $t^{q_0}_n=t^p_n$ and let $w^{q_0}=w^p$. This defines a
condition $q_0=(w^{q_0},t^{q_0}_0,t^{q_0}_1,t^{q_0}_2,\ldots)\in
\qfor$. Plainly, (a) and (b) hold for $q_0$ instead of $q$. Now if there is
no $r\geq q_0^{[v]}$ with $r\in I$, $w^r=v$ and $\nor[t_n^r]>1$ for every
$n<\omega$, we can let $q = q_0$. Hence we may assume that such
$r=(w^r,t^r_0,t^r_1,t^r_2,\ldots)$ does exist. 

Pick $j<\omega$ large enough such that $\nor[t^r_n] \geq a-\varepsilon_i$
for every $n\geq j$. Now we define $q\in \qfor$: 
\begin{itemize}
\item $w^q=w^p$, $t^q_n=t^p_n$ for $n<m_i-i(p)$,
\item $t^q_n=t^r_{n-m_i+i(p)}$ for $n\geq m_i-i(p)+j$,
\item for $m_i-i(p)\leq n<m_i-i(p)+j$ let $t^q_n\in\Sigma(t^p_n)$ be such that
\[\nor[t^q_n]\geq\nor[t^p_n]-\varepsilon_i\geq a-\varepsilon_i\quad\mbox{
  and }\quad\pos(t^q_n)\subseteq \pos(t^r_{n-m_i+i(p)})\]
(exists by the halving property). 
\end{itemize}
Clearly $p \leq q$ and (a), (b) hold. Also, for every $u\in\pos(v,
t^q_{m_i-i(p)},\ldots,t^q_{m_i-i(p)+j})$ we have $q^{[u]}\geq r$, and
hence $q^{[u]}\in I$, as $I$ is open. Consequently, $q^{[v]}\in^* I$.
\end{proof}

\begin{claim}
\label{venus}
Let $a \geq 3$ and $I\subseteq \qfor$ be open dense. Suppose that $p\in
\qfor$ and $i<\omega$ is such that $i(p)\leq m_i$ and $\nor[t^p_n]>a$ for
every $n\geq m_i-i(p)$. Then there exists $q\in\qfor$ such that  
\begin{enumerate}
\item[(a)] $p\leq q$, $w^p=w^q$, and $t^p_n=t^q_n$ for $n<m_i-i(p)$;
\item[(b)] $\nor[t^q_n]\geq a-1$ for every $n\geq m_i-i(p)$; 
\item[(c)] for every $v\in\prod\limits_{n<m_i}\bH(n)$, either $q^{[v]}
  \in^* I$, or else there is no $r\in I$ such that $r\geq q^{[v]}$, $w^r=v$
  and $\nor[t^r_n]>1$ for all $n$.
\end{enumerate}
\end{claim}

\begin{proof}[Proof of the Claim]
Let $\langle v_l:l<k\rangle$ enumerate $\prod\limits_{n<m_i}\bH(n)$, thus
$k\leq 1/\varepsilon_i$. Applying Claim \ref{pluto} $k$ times, it is
straightforward to construct a sequence $\langle q_l: l\leq k\rangle
\subseteq\qfor$ such that 
\begin{itemize}
\item $q_0=p$, $q_l\leq q_{l+1}$, $w^{q_l}=w^p$, and $t^{q_l}_n=t^p_n$ for
  every $n<m_i-i(p)$,  
\item $\nor[t^{q_l}_n]\geq a-l\cdot \varepsilon_i$ for every $n\geq
  m_i-i(p)$, and 
\item $\langle q_l,q_{l+1},v_l,a-l\cdot \varepsilon_i\rangle$ are like
$\langle p,q,v,a\rangle$ in Claim \ref{pluto}.
\end{itemize}
Then clearly $q=q_k$ is as desired.  
\end{proof}

We will argue now that the forcing notion $\qfor$ is proper. So suppose that
$N$ is a countable elementary submodel of $({\mathcal H}(\chi),\in)$ (for
some sufficiently large regular cardinal $\chi$), $K,\Sigma,\ldots\in
N$. Let $p\in N\cap\qfor$ and let $\langle I_\ell:\ell<\omega\rangle$ list
with $\omega$--repetitions all open dense subsets of $\qfor$ from $N$.    

By induction on $\ell<\omega$ we choose integers $i_\ell$ and conditions
$p_\ell\in N\cap \qfor$ as follows. We set $p_0=p$ and $i_0>i(p)$ is such
that $\nor[t^{p_0}_n]>3$ for all $n\geq m_{i_0}-i(p)$.\\
Now, assume we have defined $p_\ell\in N\cap\qfor$ and $i_\ell<\omega$ so
that $w^p=w^{p_\ell}$ and $\nor[t^{p_\ell}_n]>3+\ell$ for every $n\geq
m_{i_\ell}-i(p)$. Applying Claim \ref{venus} (inside $N$) to $3+\ell,
I_\ell, p_\ell,i_\ell$ here standing for $a,I,p,i$ there we may find a
condition $p_{\ell+1}\in N\cap\qfor$ such that 
\begin{enumerate}
\item[(a)$_\ell$] $p_\ell\leq p_{\ell+1}$, $w^p=w^{p_\ell}=w^{p_{\ell+1}}$,
  and  $t^{p_\ell}_n =t_n^{p_{\ell+1}}$ for all $n<m_{i_\ell}-i(p)$; 
\item[(b)$_\ell$] $\nor[t^{p_{\ell+1}}_n]\geq 2+\ell$ for every $n\geq
  m_{i_\ell}-i(p)$; 
\item[(c)$_\ell$] for every sequence $v\in \prod\limits_{n<m_{i_\ell}}
  \bH(n)$, if there exists $r \in I_\ell$ such that $r\geq
  p_{\ell+1}^{[v]}$, $w^r=v$ and $\nor[t^r_n]>1$ for every $n$, then
  $p_{\ell+1}^{[v]}\in^* I_\ell$.   
\end{enumerate}
Then we choose $i_{\ell+1}>i_\ell$ so that $\nor[t^{p_{\ell+1}}_n]>
3+\ell+1$ for all $n\geq m_{i_{\ell+1}}-i(p)$.
\medskip

After the inductive construction is carried out we let $q$ be the natural
fusion determined by the $p_\ell$ (so $w^q=w^p$ and $t^q_n=t^{p_\ell}_n$
whenever $n<m_{i_\ell}-i(q)$). Plainly, $q\in \qfor$ (remember 
(a)$_{\ell+1}$+(b)$_\ell$) and it is stronger than all $p_\ell$ (for $\ell< 
\omega$). Let us show that $q$ is $(N,\qfor)$--generic. To this end suppose
$I\in N$ is a dense open subset of $\qfor$ and $r\in\qfor$ is stronger than
$q$. Pick a condition $r_0= (v,t^{r_0}_0,t^{r_0}_1,t^{r_0}_2,\ldots)\geq r$
and $\ell<\omega$ such that
\begin{enumerate}
\item[$(*)$] $r_0\in I$, $I=I_\ell$ and $\lh(v)=m_{i_\ell}$, and  
\item[$(**)$] $\nor[t^{r_0}_n]>1$ for every $n<\omega$.
\end{enumerate}
Then $r_0\geq q^{[v]}\geq p_{\ell+1}^{[v]}$. Therefore, by (c)$_\ell$, we
see that $p_{\ell+1}^{[v]}\in^* I_\ell$ and hence we may find $u\in \pos(v, 
t^{r_0}_0,\ldots, t^{r_0}_k)$ (for some $k<\omega$) such that
$p_{\ell+1}^{[u]}\in I_\ell$. Then $p_{\ell+1}^{[u]}\in N\cap I$ is
compatible with $r$.
\medskip

Note that the above argument shows also that for every open dense subset
$I\in N$ of $\qfor$, the set $\{q^{[v]}:v\in\prod\limits_{n<m_i} \bH(n)\ \&\
i<\omega\}\cap I$ is predense above $q$.  
\end{proof}

\section{A nonproper product}
Here, we will give an example of two proper forcing notions
$\bQ^*_\infty(K^1,\Sigma^1)$ and $\bQ^*_\infty(K^2,\Sigma^2)$ such that
their product $\qfor$ collapses $\con$ onto $\aleph_0$.

Throughout this section we write $\log$ instead of $\log_2$.

\begin{definition}
\label{sun}
Let $x,i\in\mathbb{R}$, $x > 0$, $i\geq 0$, and $k\in\omega \setminus
\{0\}$. We let 
\[f_k(x,i)=\frac{\log\big(\log(\log(x))-i\big)}{k}\]
in the case that all three logarithms are well defined and attain a value
$\geq 1$. In all other cases we define $f_k(x,i)=1$. 
\end{definition}

\begin{lemma}
\label{merkur}
\begin{enumerate} 
\item $f_k(\frac{x}{2},i)\geq f_k(x,i)-\frac{1}{k}$; 
\item letting $j=\frac{\log(\log(x))+i}{2}$, if $f_k(x,i)\geq 2$ then
  $f_k(x,j)=f_k(x,i)-\frac{1}{k}$;  
\item letting $j$ as in (2), if $\min\{f_k(x,i),f_k(y,j)\}>1$ then $f_k(y,i)  
\geq f_k(x,i)-\frac{1}{k}$;
\item if $x\geq 2^{2^{4+i}}$ and $z$ such that $\log(\log(z))=
  \frac{\log(\log(x))+i}{2}$, then $f_k(z,i)=f_k(x,i)-\frac{1}{k}$.   
\end{enumerate}
\end{lemma}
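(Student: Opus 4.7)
My plan is to treat each of the four parts as a direct computation with the definition of $f_k$, building on two elementary tools: the identity $\log(u/2) = \log(u) - 1$ applied to the innermost logarithm, and the concavity estimate $\log(a-1) \geq \log(a) - 1$ valid for $a \geq 2$. In each part the real bookkeeping is to check when the formula $f_k(x,i) = \log(\log(\log(x))-i)/k$ is in force on both sides of the claimed inequality (the non-degenerate regime) versus when one side defaults to the value $1$.

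For (2) and (4) I expect the proof to be essentially one line each. The key observation is that the chosen $j$ (respectively $z$) satisfies $\log(\log(x)) - j = (\log(\log(x))-i)/2$ (respectively $\log(\log(z)) - i = (\log(\log(x))-i)/2$). Applying the outer logarithm therefore contributes exactly $-1$ to the numerator, hence $-1/k$ after dividing by $k$. The hypotheses $f_k(x,i) \geq 2$ in (2) and $x \geq 2^{2^{4+i}}$ in (4) are calibrated precisely to place both sides in the non-degenerate regime so that the stated equality actually holds (rather than one side collapsing to the default value).

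For (3) I would unpack the assumption $f_k(y,j) > 1$ as $\log(\log(y)) - j > 2^k \geq 2$, and combine with $j - i = (\log(\log(x))-i)/2$ to obtain $\log(\log(y)) - i > (\log(\log(x))-i)/2 + 2^k$. Monotonicity of $\log$ then yields $\log(\log(\log(y))-i) \geq \log(\log(\log(x))-i) - 1$, which after dividing by $k$ is the claim. The surplus term $2^k \geq 2$ also certifies that $f_k(y,i)$ itself lies in the non-degenerate regime, so the formula is legitimately applicable on the left-hand side.

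Part (1) is the subtlest, and I expect the main obstacle to live there. Applying the concavity estimate with $a = \log(x)$ gives $\log(\log(x/2)) \geq \log(\log(x)) - 1$, hence $\log(\log(x/2)) - i \geq \log(\log(x)) - i - 1$. Whenever $f_k(x,i)$ is non-degenerate, $\log(\log(x)) - i \geq 2$, so this lower bound is at least $(\log(\log(x))-i)/2$, and one more application of the outer $\log$ gives the desired $-1/k$ estimate. The delicate case is when $f_k(x,i)$ is computed by the formula but $f_k(x/2,i)$ has slipped across the boundary and defaulted to $1$: here one checks directly that $\log(x) \in [2^{i+2}, 2^{i+2}+1)$, so $\log(\log(x)) - i < \log(2^{i+2}+1) - i < 2.5$, whence the formula gives $f_k(x,i) < \log(2.5)/k < 2/k \leq 1 + 1/k$ and the inequality is trivial. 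Tracking this interface between the formula and the default value cleanly, rather than any single algebraic manipulation, is where the care is needed.
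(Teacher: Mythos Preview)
Your argument is correct and follows essentially the same route as the paper: the same concavity estimate $\log(a-1)\geq\log(a)-1$ for (1), and the same halving identity $\log(\log(\cdot))-j=(\log(\log(x))-i)/2$ for (2)--(4). If anything you are more scrupulous than the paper about the interface with the default value $1$ (your boundary analysis in (1) and the use of the $2^k$ surplus in (3) to keep $f_k(y,i)$ in the formula regime); the paper simply writes down the chain of inequalities and divides by $k$, leaving those checks implicit.
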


\begin{proof}
(1)\quad Note that for $x \geq 2$ we have 
\[(*)\qquad\qquad\log (x-1) \geq \log (x) - 1.\]
Indeed, $x \geq 2$ implies $x-1\geq \frac{x}{2}$. Applying $\log$ to
both sides we get $\log(x-1)\geq\log(\frac{x}{2})=\log(x)-1$.

If $x<2^{2^{2+i}}$, then $\log(\log(\log(x))-i)<1$ (if at all defined), and 
$f_k(x,i)=1=f_k(\frac{x}{2},i)$. So assume $x\geq 2^{2^{2+i}}$. Then
$\log(x)\geq 2^{2+i}\geq 2$ and $\log(\log(x))-i\geq 2$, and hence we may
apply $(*)$ with $\log(x)$ and $\log(\log(x))-i$ and obtain 
\[\begin{array}{l}
\log\big(\log(\log(\frac{x}{2}))-i\big)=\log\big(\log(\log(x)-1)-i\big) \geq 
\log\big(\log(\log(x))-1-i\big)\\ 
\geq \log\big(\log(\log(x))-i\big)-1.
\end{array}\] 
By dividing both sides by $k$ we arrive at (1).
\medskip

\noindent (2)\quad Note that $f_k(x,i)\geq 2$ implies $\log(\log(x))-i\geq
4$ and hence $\log(\log(x))-j=\frac{\log(\log(x))-i}{2}\geq 2$. Consequently  
\[\begin{array}{l}
f_k(x,j)=\frac{\log\big(\log(\log(x))-j\big)}{k}=
\frac{\log\big((\log(\log(x))-i)/2\big)}{k}\\ 
=\frac{\log\big(\log(\log(x))-i\big)-1}{k}=f_k(x,i)-\frac{1}{k}.
\end{array}\]
\medskip

\noindent (3)\quad By assumption we have $\log(\log(y))-j\geq 0$. By pluging
in the definition of $j$ and adding $j-i$ to both sides we obtain
$\log(\log(y))-i\geq \frac{1}{2}(\log(\log (x))-i)$ and hence
$\log\big(\log(\log(y))-i\big)\geq \log\big(\log(\log (x))-i\big)-1$. After
dividing by $k$ we reach at (3).  
\medskip

\noindent (4)\quad  Note that 
\[\begin{array}{l}
f_k(z,i)=\frac{1}{k}\log\big(\log(\log(z))-i\big)=
\frac{1}{k}\log\big((\log(\log(x))-i)/2\big)\\ 
=\frac{1}{k}[\log\big(\log(\log(x))-i\big)-1]=f_k(x,i)-\frac{1}{k}.
\end{array}\]
\end{proof}

We are going to modify the example in Corollary \ref{balu} and
Example \ref{excol}. 

Let $\bar{m}=\langle m_i:i<\omega\rangle$ be an increasing sequence of
integers such that $m_0=0$ and $m_{i+1}-m_i>4^{i+3}$. For $j <\omega$ let
$\bH(j)=i+2$, where $i$ is such that $m_i\leq j<m_{i+1}$, and let
$g(x)=x$. The local good creating pair $(K^{\bH}_g,\Sigma^{\bH}_g)$
introduced in \ref{balu} will be denoted by $(K^1,\Sigma^1)$. By \ref{excol} 
we know that $((K^1)^{\bar{m}},(\Sigma^1)^{\bar{m}})$ (see
\ref{msumar}) is sufficiently bad and hence (by \ref{prebalu}) the forcing  
$\bQ^*_\infty(K^1, \Sigma^1)$ collapses $\mathfrak{c}$ into $\aleph_0$.

Recall that for a creature $t\in K^1$ we have 
\begin{itemize}
\item $\dis[t]=\langle i^t,A^t\rangle$ for some $i^t<\omega$ and
  $\emptyset\neq A^t\subseteq \bH(i^t)$,
\item $\nor[t]=|A^t|$ and $\pos(t)=A^t$. 
\end{itemize}
Let $l_n=|\bH(n)|$ and 
\[k_n=\lfloor\sqrt{\max\{k\in\omega\setminus\{0\}:f_k(l_n,0)>   
  1\}}\rfloor\qquad \mbox{ if }l_n>2^{2^{16}},\]
and $k_n=2$ if $l_n\leq 2^{2^{16}}$. Certainly we have
$\lim\limits_{n\rightarrow\infty}l_n=\infty$ and therefore
$\lim\limits_{n\rightarrow \infty}k_n=\infty$ as well (and the sequence
$\langle k_n:n<\omega\rangle$ is non-decreasing). Note also that
$\lim\limits_{n\to\infty} f_{k_n} (l_n,0)=\infty$.   

\begin{definition}
\label{mars}
Let $K$ consist of all creatures $t\in \CR[\bH]$ such that 
\begin{itemize}
\item $\dis[t]=\langle m^t,A^t,i^t\rangle$ for some $m^t<\omega$ and
$\emptyset\neq A^t\subseteq\bH(m^t)$, and $i^t\in\omega$, $0\leq
i^t\leq\log(\log(l_{m^t}))$,  
\item $\nor[t]=f_{k_{m^t}}(|A^t|,i^t)$, $m^t_\dn=m^t$, $m^t_\up=m^t+1$ and
  $\pos(t)=A^t$.  
\end{itemize}
For $t\in K$ we let
\[\Sigma(t)=\{s\in K:m^s=m^t\ \&\ A^s\subseteq A^t\ \&\ i^s\geq i^t\}.\]  
\end{definition}

\begin{lemma}
$(K,\Sigma)$ is a local forgetful strongly finitary good creating pair for
$\bH$. The forcing notion $\qfor$ collapses $\con$ to $\aleph_0$.  
\end{lemma}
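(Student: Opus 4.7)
The first assertion, that $(K,\Sigma)$ is a local forgetful good creating pair, is a direct unpacking of Definition \ref{crpairs}: locality ($m^t_\up = m^t_\dn + 1$) and forgetfulness follow from the fact that each creature in $K$ is determined by a single-column datum $A^t \subseteq \bH(m^t)$, while fullness, $t \in \Sigma(t)$, the invariance of $m^t_\dn$ and $m^t_\up$ along $\Sigma(t)$, and transitivity of $\Sigma$ are all immediate from the defining relations $m^s = m^t$, $A^s \subseteq A^t$, $i^s \geq i^t$. For the collapsing I plan to mimic the reduction in Corollary \ref{balu}: pass to the $\bar{m}$--summarization $(K^{\bar{m}}, \Sigma^{\bar{m}})$ of Definition \ref{msumar}, which is equivalent as a forcing notion to $\qfor$, and apply Proposition \ref{collapse}. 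Strong finitariness of $(K^{\bar{m}}, \Sigma^{\bar{m}})$ is clear, since each local $t_\ell \in K$ has only finitely many $(A^{t_\ell}, i^{t_\ell})$--choices.

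To witness sufficient $h$--badness with $h(x) = x/2$, take $M_i = i + 2$, $A_i = {}^{M_i}2$, $B_i = A_{i+1} \times 2$, and $d_i = m_{i+1} - m_i$. The inequality $|B_i| \cdot 2^{M_i} = 4^{i+3} < d_i$ permits Lemma \ref{techlem} to produce $F_i \colon A_i \times {}^{d_i}M_i \to B_i$; since the concrete $F_i(a, u) = \sum_{j<d_i} a(u(j)) \bmod |B_i|$ given by its proof does not depend on the common size $\ell$, inspection of the same argument shows that this $F_i$ works for any family $\langle c_j\rangle$ with variable sizes $|c_j| \geq 2$, yielding $c^b_j \subseteq c_j$ with $|c^b_j| \geq \lfloor |c_j|/2 \rfloor$. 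Extend $F_i$ to all of $A_i \times \prod_{m < m_{i+1}}\bH(m)$ by sending the single extra value $i + 2 \in \bH(\ell) \setminus M_i$ to an arbitrary fixed element of $M_i$. Given a summarized creature $t = \Sigma^{\bsum}(t_\ell : m_i \leq \ell < m_{i+1})$ with $\nor[t] > 4$, first refine each $t_\ell$ to $\hat{t}_\ell$ with $A^{\hat{t}_\ell} = A^{t_\ell} \cap M_i$ and $i^{\hat{t}_\ell} = i^{t_\ell}$, then apply the variable-size version of Lemma \ref{techlem} to $\langle A^{\hat{t}_\ell}\rangle$ to obtain $c^b_\ell \subseteq A^{\hat{t}_\ell}$ for each $b \in B_i$. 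Letting $A^{\tilde{t}_\ell} = c^b_\ell$ and $i^{\tilde{t}_\ell} = i^{t_\ell}$, the tight sum $s_b = \Sigma^{\bsum}(\tilde{t}_\ell : m_i \leq \ell < m_{i+1})$ lies in $\Sigma^{\bar{m}}(t)$, and the coding property of $F_i$ at once gives $F_i(a, v) = b$ for every $w$ and every $v \in \pos(w, s_b)$.

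The main obstacle is the norm estimate for $s_b$, since the modified norm $f_{k_\ell}(|A^{t_\ell}|, i^{t_\ell})$ is more delicate than the cardinality norm of Example \ref{excol}. Iterating the elementary inequality $\log(x-1) \geq \log(x) - 1$ (for $x \geq 2$) through the three nested logarithms defining $f_k$, one shows that dropping a single element of $A^{t_\ell}$ costs at most $1/k_\ell$ in norm; combined with Lemma \ref{merkur}(1) for the halving step $\hat{A}^{t_\ell} \to c^b_\ell$ (possibly preceded by one further single-element deletion due to $\lfloor \cdot/2\rfloor$), each $\tilde{t}_\ell$ satisfies $\nor[\tilde{t}_\ell] \geq \nor[t_\ell] - 3/k_\ell$. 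Since $k_\ell$ is constant on $[m_i, m_{i+1})$, this gives $\nor[s_b] \geq \nor[t] - 3/k_{m_i}$; and because $\nor[t] > 4$ forces $\log\log\log l_{m_i} > 16$, hence $k_{m_i} > 4$, we obtain $\nor[s_b] \geq \nor[t]/2 \geq \min\{h(\nor[t]), h(i)\}$. Thus $(K^{\bar{m}}, \Sigma^{\bar{m}})$ is sufficiently $h$--bad, and Proposition \ref{collapse} yields that $\bQ^*_\infty(K^{\bar{m}}, \Sigma^{\bar{m}})$, and therefore $\qfor$, collapses $\con$ onto $\aleph_0$.
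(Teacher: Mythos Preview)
Your approach is essentially the one the paper sketches in its one-line proof: pass to the $\bar m$--summarization and check that the same witnesses $\bar A,\bar F$ that worked for $((K^1)^{\bar m},(\Sigma^1)^{\bar m})$ in Example~\ref{excol} (modulo the $+1$ shift in $\bH$) still certify sufficient $h$--badness, the new content being the norm estimate for $f_{k_\ell}$ via Lemma~\ref{merkur}(1). Your write-up is in fact more careful than the paper's: the variable-size extension of Lemma~\ref{techlem} and the handling of the extra value $i+2\in\bH(\ell)\setminus M_i$ are genuine details that the paper glosses over, and you treat them correctly.

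There is one numerical slip in your norm estimate. The implication ``$\nor[t]>4$ forces $\log\log\log l_{m_i}>16$, hence $k_{m_i}>4$'' is not valid. For example, if $L:=\log\log\log l_{m_i}\in(12,16]$ then $M=\max\{k:f_k(l_{m_i},0)>1\}\in\{12,\ldots,15\}$, so $k_{m_i}=\lfloor\sqrt{M}\rfloor=3$, while $f_{k_{m_i}}(l_{m_i},0)=L/3>4$ is perfectly possible. What \emph{does} follow from $\nor[t_\ell]>4$ is the weaker bound $k_\ell\geq 2$: if $k_\ell=1$ then either $l_\ell\leq 16$ and $f_1(l_\ell,0)=1$ by convention, or $l_\ell>16$ and $M\leq 3$, whence $f_1(l_\ell,0)=L\leq M+1\leq 4$; in either case $\nor[t_\ell]\leq f_{k_\ell}(l_\ell,0)\leq 4$. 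But $k_\ell\geq 2$ already suffices for your conclusion: from $\nor[s_b]\geq\nor[t]-3/k_{m_i}\geq\nor[t]-\tfrac32$ and $\nor[t]>4$ you get $\nor[s_b]\geq\nor[t]/2$ since $\nor[t]/2>2>\tfrac32$. So the final inequality stands; only the intermediate justification needs this small repair.
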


\begin{proof}
It is straightforward to check that $(K^{\bar{m}},\Sigma^{\bar{m}})$
inherits the sufficient badness of $((K^1)^{\bar{m}}, (\Sigma^1)^{\bar{m}})$
(remember \ref{merkur}(1)). Then use Proposition \ref{prebalu}. 
\end{proof}

We are now going to define the desired factoring $\qfor\simeq\bP^0\times \bP^1$
into proper factors $\bP^0, \bP^1$. For this we recursively define an
increasing sequence $\bar{n}=\langle n_i:i<\omega\rangle$ so that $n_0 = 0$
and $n_{i+1}$ is large enough such that 
\[k_{n_{i+1}}\geq 2\cdot \prod\limits_{j<n_i} \bH(j).\]
We put $U^0=\bigcup\limits_{i<\omega}[n_{2i},n_{2i+1})$ and $U^1=
\bigcup\limits_{i<\omega} [n_{2i+1},n_{2i})$ and we let $\pi^0:\omega 
\longrightarrow U^0$ and $\pi^1:\omega\longrightarrow U^1$ be the increasing
enumerations.    

\begin{definition}
Let $\ell\in\{0,1\}$. We define $\bH^\ell=\bH\comp\pi^\ell$ and we introduce
$K^\ell,\Sigma^\ell$ as follows. 
\begin{enumerate}
\item $K^\ell$ consist of all creatures $t\in \CR[\bH^\ell]$ such that 
\begin{itemize}
\item $\dis[t]=\langle m^t,A^t,i^t\rangle$ for some $m^t<\omega$ and
$\emptyset\neq A^t\subseteq\bH^\ell(m^t)$, and $i^t\in\omega$, $0\leq i^t
\leq\log(\log(l_n))$, where $n=\pi^\ell(m^t)$,  
\item $m^t_\dn=m^t$, $m^t_\up=m^t+1$, $\pos(t)=A^t$ and
  $\nor[t]=f_{k_n}(|A^t|,i^t)$ (where again $n=\pi^\ell(m^t)$).  
\end{itemize}
\item For $t\in K^\ell$ we let
\[\Sigma^\ell(t)=\{s\in K^\ell:m^s=m^t\ \&\ A^s\subseteq A^t\ \&\ i^s\geq
i^t\}.\] 
\end{enumerate}
\end{definition}

\begin{lemma}
\label{lemtoprop}
\begin{enumerate}
\item For $\ell\in\{0,1\}$, $(K^\ell,\Sigma^\ell)$ is a local forgetful
  good creating pair for $\bH^\ell$. 
\item Let $\bar{m}^0=\langle m_i^0:i<\omega\rangle$ and
$\bar{\varepsilon}^0=\langle \varepsilon_i^0:i<\omega\rangle$ be such that 
$\pi^0(m_i^0)= n_{2i}$ and $\varepsilon_i^0=2/k_{n_{2i}}$. Then $(K^0,
\Sigma^0)$ has the $(\bar{\varepsilon}^0,\bar{m}^0)$--halving property.  
\item Let $\bar{m}^1=\langle m_i^1:i<\omega\rangle$ and $\bar{\varepsilon}^1
=\langle \varepsilon_i^1:i<\omega\rangle$ be such that $\pi^1(m_i^1)=
n_{2i+1}$ and $\varepsilon_i^1=2/k_{n_{2i+1}}$. Then $(K^1,\Sigma^1)$ has
the $(\bar{\varepsilon}^1,\bar{m}^1)$--halving property.   
\end{enumerate}
\end{lemma}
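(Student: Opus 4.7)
The plan is to treat part (1) as a short verification and then to construct an explicit halving for parts (2) and (3).

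Part (1) is routine. Every $t \in K^\ell$ has $m^t_\up = m^t_\dn + 1$, so $(K^\ell,\Sigma^\ell)$ is local. Fullness and forgetfulness hold because in the local case the positions $\pos(t) = A^t$ entirely determine $\val[t]$. The remaining axioms (reflexivity of $\Sigma^\ell$, agreement of $m_\dn$ and $m_\up$ on $\Sigma^\ell(t)$, and transitivity) all reduce to reflexivity and transitivity of the inclusion $A^s \subseteq A^t$ and the inequality $i^s \geq i^t$.

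For parts (2) and (3), fix $\ell\in\{0,1\}$, let $t \in K^\ell$ with $m^t \geq m_i$ and $\nor[t] \geq 2$, and set $n = \pi^\ell(m^t)$. Define $t^*$ by $\dis[t^*] = \langle m^t, A^t, j\rangle$ with $j = \lfloor(\log(\log(|A^t|))+i^t)/2\rfloor$; the point of this choice is that $A^{t^*}$ is left unchanged while the ``distance to the ceiling'' $\log(\log(|A^t|)) - i^{t^*}$ is halved. Since $\nor[t]\geq 2$ gives $\log(\log(|A^t|)) - i^t \geq 2^{2k_n} \geq 4$, one checks $j \geq i^t + 2$ lies in the valid range for an index, so $t^* \in \Sigma^\ell(t)$.

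Property (i) of Definition \ref{half} follows directly: $j \leq (\log(\log(|A^t|)) + i^t)/2$ gives $\log(\log(|A^t|)) - j \geq (\log(\log(|A^t|)) - i^t)/2$, so in the good case of Definition \ref{sun} we obtain $\nor[t^*] \geq (\log(\log(\log(|A^t|)) - i^t) - 1)/k_n = \nor[t] - 1/k_n$. For property (ii), given $s \in \Sigma^\ell(t^*)$ with $\nor[s] > 1$, set $t_0 \in \Sigma^\ell(t)$ by $\dis[t_0] = \langle m^t, A^s, i^t\rangle$; then $\pos(t_0) = A^s = \pos(s)$. The crux is that $\nor[s]>1$ places us in the defined case of $f_{k_n}$ with $\log(\log(|A^s|)) - i^s > 2^{k_n} \geq 2$. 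Combined with $i^s \geq j \geq (\log(\log(|A^t|))+i^t)/2 - 1$, this yields $\log(\log(|A^s|)) - i^t \geq (\log(\log(|A^t|))-i^t)/2$ (the $-1$ is absorbed by $2^{k_n}\geq 2$), whence $\nor[t_0] \geq \nor[t] - 1/k_n$ by the same log manipulation.

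Finally, $1/k_n \leq \varepsilon_i$: $l_n = |\bH(n)|$ is weakly increasing in $n$, hence so is $k_n$; and $m^t \geq m_i$ together with $\pi^\ell(m_i) = n_{2i}$ (resp.\ $n_{2i+1}$) and monotonicity of $\pi^\ell$ force $n \geq n_{2i}$ (resp.\ $n_{2i+1}$). The main delicacy is the integer constraint on $i^{t^*}$, which the floor handles cleanly: taking $\lceil\cdot\rceil$ instead would double the norm loss to $2/k_n$ and break the bound $\nor[t^*]\geq \nor[t] - \varepsilon_i$ against the chosen $\varepsilon_i = 1/k_{n_{2i}}$ (resp.\ $1/k_{n_{2i+1}}$).
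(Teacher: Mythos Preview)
Your argument is correct and follows the same route as the paper: keep $A^t$ fixed and raise the index from $i^t$ to the midpoint $j$, then for the recovery take $t_0$ with set $A^s$ and original index $i^t$. The only substantive difference is that you round $j$ down to an integer (as required by $i^t\in\omega$ in Definition~\ref{mars}) and carry out the $f_{k_n}$ estimates directly, whereas the paper works with the unrounded midpoint, quotes Lemma~\ref{merkur}(2),(4), and glosses over the integrality issue.
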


\begin{proof}
(1)\quad Should be clear.
\medskip

\noindent (2)\quad Let $t\in K^0$, $\nor[t]\geq 2$, $\dis[t]=\langle
m,A,i^*\rangle$. Let $n=\pi^0(m)\geq n_{2i}$ (so $m_i^0\leq
m=m^t_\dn$). Define $j=\frac{\log(\log(|A|))+i^*}{2}$ and let $z$ such 
that $\log(\log(z))=j$. Certainly, $k_n\geq 2$ and $f_{k_n}(|A|,i^*) \geq
2$, so $\log(\log(|A|))-i^*\geq 16$ and hence $i^*<j\leq\lceil
j\rceil<\log(\log(|A|))\leq\log(\log(l_n))$. Let $t^*\in K^0$ be such that
$\dis[t^*] =\langle m,A,\lceil j\rceil\rangle$. Clearly $t^*\in
\Sigma^0(t)$. We are going to argue that $t^*$ is an $\varepsilon_i^0$--half
of $t$ (in $(K^0,\Sigma^0)$).  

By $(*)$ of the proof of Lemma \ref{merkur}(1) and then by \ref{merkur}(2)
we have  
\[\begin{array}{l}
\nor[t^*]=f_{k_n}(|A|,\lceil j\rceil)=\frac{1}{k_n}\log\big(\log(\log(|A|))
- \lceil j\rceil\big)\geq \\
\frac{1}{k_n}\log\big((\log(\log(|A|))-j)-1\big)\geq \frac{1}{k_n}
\Big(\log\big((\log(\log(|A|))-j)\big)-1\Big)=\\   
f_{k_n}(|A|,j)-\frac{1}{k_n}=f_{k_n}(|A|,i^*)-\frac{2}{k_n}
\geq\nor[t]-\varepsilon_i^0. 
\end{array}\]
Now let $s\in\Sigma^0(t^*)$ be such that $\nor[s]>1$. Let $\dis[s]=\langle
m,A',i'\rangle$, thus $A'\subseteq A$ and $i'\geq \lceil j\rceil\geq
j$. Let $t_0\in K^0$ be such that $\dis[t_0]=\langle m,A',i^*\rangle$. Then
$t_0\in\Sigma^0(t)$ and $\pos(t_0)=A'=\pos(s)$. Also, $\nor[s]>1$ implies
$\log(\log(|A'|))>i'\geq j$. By the definition of $z$ we conclude
$|A'|>z$. Noticing that $|A|> 2^{2^{4+i^*}}$ we apply Lemma \ref{merkur}(4)
to obtain  
\[\nor[t_0]=f_{k_n}(|A'|,i^*)\geq f_{k_n}(z,i^*) =
f_{k_n}(|A|,i^*)-\frac{1}{k_n}\geq \nor[t]-\varepsilon_i^0.\] 

\noindent (3)\quad Like (2) above.
\end{proof}

\begin{corollary}
\begin{enumerate}
\item The forcing notions $\bQ^*_\infty(K^\ell,\Sigma^\ell)$ (for
  $\ell=0,1$) are proper. 
\item Let $\bQ=\{p\in\bQ^*_\infty(K,\Sigma):i(p)=n_i,\ i<\omega\}$. Then
  $\bQ$ is a dense suborder of $\qfor$ and it is isomorphic with a dense
  suborder of the product $\bQ^*_\infty(K^0,\Sigma^0)\times
  \bQ^*_\infty(K^1,\Sigma^1)$. Consequently, the latter forcing collapses
  $\con$ to $\aleph_0$.
\end{enumerate} 
\end{corollary}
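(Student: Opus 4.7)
The plan is to obtain (1) as a direct application of Theorem \ref{getproper} to each factor, and to prove (2) by decomposing a generic condition of $\bQ$ along the parity of its $\bar n$--block.

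For (1), fix $\ell\in\{0,1\}$ and take $\bar m,\bar\varepsilon$ as in Lemma \ref{lemtoprop}(2)--(3). That lemma already supplies locality, forgetfulness and the $(\bar\varepsilon,\bar m)$--halving property; the only remaining hypothesis of Theorem \ref{getproper} is the counting inequality $\left|\prod_{n<m_i}\bH^\ell(n)\right|\leq 1/\varepsilon_i$. By definition $\pi^\ell(m_i)=n_{2i+\ell}$, so the first $m_i$ values of $\bH^\ell=\bH\comp\pi^\ell$ are the $\bH(n)$ for $n\in U^\ell\cap[0,n_{2i+\ell})$. This set of indices is contained in $[0,n_{2i+\ell-1})$ (the product being empty when $i=\ell=0$), so the product is bounded by $\prod_{n<n_{2i+\ell-1}}\bH(n)$. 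The recursive choice of $\bar n$ gives $k_{n_{2i+\ell}}\geq\prod_{n<n_{2i+\ell-1}}\bH(n)$, and since $1/\varepsilon_i=k_{n_{2i+\ell}}$, the inequality holds. Theorem \ref{getproper} then yields properness of $\bQ^*_\infty(K^\ell,\Sigma^\ell)$.

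For (2), density of $\bQ$ in $\qfor$ is immediate from locality together with $\lim_n\nor[t^p_n]=\infty$: for $p\in\qfor$ and any $k$ with $n_k\geq i(p)$, put $j=n_k-i(p)$, pick any $v\in\pos(w^p,t^p_0,\ldots,t^p_{j-1})$ (nonempty by fullness), and observe that $q=(v,t^p_j,t^p_{j+1},\ldots)\in\bQ$ extends $p$. To build the isomorphism of $\bQ$ onto a dense subset of $\bQ^*_\infty(K^0,\Sigma^0)\times\bQ^*_\infty(K^1,\Sigma^1)$, send $p\in\bQ$ with $i(p)=n_i$ to the pair $(p^0,p^1)$, where $w^{p^\ell}$ is the restriction of $w^p$ to the positions in $U^\ell\cap[0,n_i)$ re-indexed by $\pi^\ell$, and each creature $t^p_k$ sitting at position $m=n_i+k\in U^\ell$ is reinterpreted as the $K^\ell$--creature at index $(\pi^\ell)^{-1}(m)$ with the same $A^t$ and $i^t$ and hence (by the definition of $\nor$, which reads off the \emph{original} index $m$) the same norm. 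The condition $\lim_k\nor[t^{p^\ell}_k]=\infty$ is inherited from $\lim_k\nor[t^p_k]=\infty$, and $p\mapsto(p^0,p^1)$ is plainly an order-embedding.

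Density of the image: given $(r^0,r^1)$ in the product, choose $i$ so that $|U^\ell\cap[0,n_i)|\geq\lh(w^{r^\ell})$ for $\ell=0,1$. For each $\ell$ extend $w^{r^\ell}$ by an element of $\pos(w^{r^\ell},t^{r^\ell}_0,\ldots,t^{r^\ell}_{b^\ell-1})$ with $b^\ell=|U^\ell\cap[0,n_i)|-\lh(w^{r^\ell})$, then interleave the two extensions in the order dictated by $U^0\cup U^1=[0,n_i)$ into a single $w^p$ of length $n_i$, and interleave the remaining tails of creatures of $r^0,r^1$ into a single sequence of $K$--creatures. The resulting $p\in\bQ$ satisfies $p^\ell\geq r^\ell$. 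Hence $\bQ$ is dense in $\qfor$ and isomorphic to a dense subset of the product, so all three forcings are forcing-equivalent; since $\qfor$ collapses $\con$ to $\aleph_0$ by the preceding lemma, so does the product. The only delicate point is the alignment of condition-boundaries with the $\bar n$--grid; once $p\in\bQ$ is insisted upon (so that $i(p)$ is some $n_i$), the splitting into $U^0$-- and $U^1$--strands and the verification of the order-isomorphism are bookkeeping.
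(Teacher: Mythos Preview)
Your argument is correct and follows the same approach as the paper. For (1) you verify exactly the counting inequality the paper checks (using $U^\ell\cap[0,n_{2i+\ell})\subseteq[0,n_{2i+\ell-1})$ and the recursive choice of $\bar n$), and for (2) the paper simply writes ``Should be clear''; your decomposition of $p\in\bQ$ into $(p^0,p^1)$ along $U^0,U^1$ and the density-of-image argument are precisely the intended bookkeeping.
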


\begin{proof}
(1)\quad Let $\bar{m}^0,\bar{\varepsilon}^0$ be as in \ref{lemtoprop}(2). By
the choice of $\bar{n}$ we have  
\[|\prod\limits_{n<m_i^0}\bH^0(n)|=|\prod\{\bH(j):j\in\bigcup\limits_{\ell<i}
[n_{2\ell},n_{2\ell+1})\}|\leq \prod\limits_{j<n_{2i-1}}\bH(i)\leq
\frac{1}{2} k_{n_{2i}}=1/\varepsilon_i^0.\] 
Consequently, Theorem \ref{getproper} and Lemma \ref{lemtoprop}(1,2) imply
that $\bQ^*_\infty(K^0,\Sigma^0)$ is proper. 

Similarly for $\bQ^*_\infty(K^1,\Sigma^1)$
\medskip

(2)\quad Should be clear.
\end{proof}

\end{document}